\newtheorem {thm}{Theorem}
\newtheorem {cor}[thm]{Corollary}
\newtheorem {lem}[thm]{Lemma}
\newtheorem {prop}[thm]{Proposition}
\theoremstyle{definition}
\newtheorem {rem}[thm]{Remark}
\DeclareMathOperator{\ord}{ord}
\DeclareMathOperator{\Gal}{Gal}
\DeclareMathOperator{\id}{id}
\DeclareMathOperator{\N}{N}
\DeclareMathOperator{\Li}{Li}
\DeclareMathOperator{\ind}{ind}
\newcommand{\Q}{\mathbb{Q}}
\newcommand{\Z}{\mathbb{Z}}
\newcommand{\p}{\mathfrak{p}}
\newcommand{\OO}{\mathcal{O}}
\newcommand{\av}[1]{\left\lvert#1\right\rvert}
\newcommand{\floor}[1]{\lfloor #1 \rfloor}
\newcommand{\bo}[1]{O\left( #1 \right)}
\newcommand{\cset}[1]{\left\lvert\left\{ #1 \right\}\right\rvert}
\newcommand{\set}[1]{\left\{ #1 \right\}}
\newcommand{\mc}{\mathcal}
\renewcommand{\geq}{\geqslant}
\renewcommand{\leq}{\leqslant}
\newcommand{\customlabel}[2]{%
   \protected@write \@auxout {}{\string \newlabel {#1}{{#2}{\thepage}{#2}{#1}{}} }%
   \hypertarget{#1}{}
}
\title[On the distribution of the order and index]{On the distribution of the order and index\\ for the reductions of algebraic numbers}
\begin{document}

\author{Pietro~Sgobba}
\address[]{Department of Mathematics, University of Luxembourg, 6 Avenue de la Fonte, 4364 Esch-sur-Alzette, Luxembourg}
\email{pietro.sgobba@uni.lu}

\keywords{Number field, reduction, multiplicative order, density, Kummer theory.}
\subjclass[2010]{Primary: 11R44; Secondary: 11R45, 11R18, 11R21}

\begin{abstract}
Let $\alpha_1,\ldots,\alpha_r$ be algebraic numbers in a number field $K$ generating a subgroup of rank $r$ in $K^\times$. We investigate under GRH the number of primes $\p$ of $K$ such that each of the orders of $(\alpha_i\bmod\p)$ lies in a given arithmetic progression associated to $\alpha_i$. We also study the primes $\p$ for which the index of $(\alpha_i\bmod\p)$ is a fixed integer or lies in a given set of integers for each $i$. An additional condition on the Frobenius conjugacy class of $\p$ may be considered. Such results are generalizations of a theorem of Ziegler from 2006, which concerns the case $r=1$ of this problem.
\end{abstract}

\maketitle

\section{Introduction}
Consider a number field $K$ and finitely many algebraic numbers $\alpha_1,\ldots,\alpha_r\in K^\times$ which generate a multiplicative subgroup of $K^\times$ of positive rank $r$.
Let $\p$ be a prime of $K$ such that for each $i$ the reduction of $\alpha_i$ modulo $\p$ is a well-defined element of $k_\p^\times$ (where $k_\p$ is the residue field at $\p$). We study the set of primes such that for each $i$ the multiplicative order of $(\alpha_i\bmod\p)$ lies in a given arithmetic progression. 

More precisely, write $\ord_{\p}(\alpha_i)$ for the order of $(\alpha_i\bmod\p)$. We will prove under GRH the existence of the density of primes $\p$ satisfying $\ord_{\p}(\alpha_i)\equiv a_i\bmod d_i$ for each $i$, where $a_i,d_i$ are some fixed integers. In Theorem \ref{main} we give an asymptotic formula for the number of such primes. 
We also study the density of primes satisfying conditions on the index. Write $\ind_\p(\alpha_i)$ for the index of the subgroup generated by $(\alpha_i\bmod\p)$ in $k_\p^\times$. Notice that $\ind_\p(\alpha_i)=(\N\p-1)/\ord_\p(\alpha_i)$. We prove the existence of the density of primes $\p$ such that  $\ind_\p(\alpha_i)=t_i$ for each $i$, where the $t_i$'s are positive integers, and more generally such that $\ind_\p(\alpha_i)$ lies in a given sequence of integers.
Given a finite Galois extension of $K$, a condition on the conjugacy class of Frobenius automorphisms of the primes lying above $\p$  may also be introduced.

These results are generalizations of Ziegler's work \cite{zieg} from 2006, which concerns the case of rank $1$. 
Moreover, in \cite{PS1} the author and Perucca have generalized Ziegler's results to study the set of primes for which the order of the reduction of a finitely generated group of algebraic numbers lies in a given arithmetic progression, and in \cite{PS2} they have investigated properties of the density of this set.
Notice that problems of this kind have been studied in various papers by Chinen and Murata, and by Moree, see for instance \cite{ChinenMurata, Moree}, and that they are related to Artin's Conjecture on primitive roots, see the survey \cite{MoreeArtin} by Moree.

\subsection{Notation}\label{notation}
We make use of the following standard notation: $\mu$ is the M\"obius function; $\varphi$ is Eluer's totient function; $\zeta_n$ is a primitive $n$-th root of unity; $(x,y)$ is the greatest common divisor of $x$ and $y$, while $[x_1,\ldots,x_r]$ is the least common multiple of $x_1,\ldots,x_r$; if $S$ is a set of primes of $K$, then $S(x)$ is the number of elements of $S$ having norm at most $x$. 
We write $N=(n_1,\ldots,n_r)$ and $T=(t_1,\ldots,t_r)$ for $r$-dimensional multi-indices, by which we mean $r$-tuples of positive integers.
We thus write
\[ \sum_N=\sum_{n_1\geq1}\cdots\sum_{n_r\geq1} \]
for the multiple series on the indices $n_i$, and similarly for $T$, as well as for finite multiple sums.
We denote by $K_{N,T}$ the compositum of the fields
\[ K\big( \zeta_{n_it_i},\alpha_i^{1/n_it_i} \big) \]
for $i\in\{1,\ldots,r\}$, namely
\[ K_{N,T}=K\big(\zeta_{[n_1t_1,\ldots,n_rt_r]},\alpha_1^{1/n_1t_1},\ldots,\alpha_r^{1/n_rt_r}\big)\,, \]
and we similarly define $F_{N,T}$, if $F$ is a finite extension of $K$. Moreover, if $F/K$ is Galois and $\p$ is a prime of $K$ which is unramified in $F$, then $(\p,F/K)$ denotes the conjugacy class of $\Gal(F/K)$  consisting of Frobenius automorphisms associated to the primes of $F$ lying above $\p$.

\subsection{Main results}\label{sec-main}
The following results are conditional under (GRH), by which we mean the extended Riemann hypothesis for the Dedekind zeta function of a number field, which allows us to use the effective Chebotarev density theorem (see for instance \cite[Theorem 2]{zieg}).

In the following statements we tacitly exclude the finitely many primes $\p$ of $K$ that appear in the prime factorizations of the fractional ideals generated by the $\alpha_i$'s, and those that ramify in a given finite Galois extension $F$ of $K$.

\begin{thm}\label{main} 
Let $F/K$ be a finite Galois extension, and  let $C$ be a union of conjugacy classes of $\Gal(F/K)$.
For $1\leq i\leq r$, let $a_i$ and $d_i\geq2$ be integers. Define the following set of primes of $K$:
\[
\mc P :=\set{\p: \ord_\p(\alpha_i)\equiv a_i\bmod d_i\ \forall  i ,\, \begin{pmatrix}\p \\ F/K  \end{pmatrix}\subseteq C  }\,.
\]

Assuming (GRH), we have
\begin{equation}\label{eq-P}
\mc P(x)=\frac{x}{\log x}\sum_{T}\
	    	\sum_{N } 
	    	\frac{(\prod_{i}\mu(n_i))c(N,T)}{[F_{w,N,T}:K]}
			+\bo{\frac{x}{(\log x)^{1+\frac{1}{r+1}}}}\,. 
\end{equation}
where $w=w(T):=[d_1t_1,\ldots,d_rt_r]$, and $F_{w,N,T}$ denotes the compositum of the fields $F(\zeta_w)$ and $F_{N,T}$, namely
\[
F_{w,N,T}=F\big(\zeta_{[d_1t_1,\ldots,d_rt_r,n_1t_1,\ldots,n_rt_r]},\alpha_1^{1/n_1t_1},\ldots,\alpha_r^{1/n_rt_r}\big)\,,
\]
and
\[
c(N,T)=\cset{\sigma\in\Gal(F_{w,N,T}/K):\,\forall i\ \sigma(\zeta_{d_it_i})=\zeta_{d_it_i}^{1+a_it_i}	,\, \sigma|_{K_{N,T}}=\id,\,\sigma|_F\in C}\,.
\]
In particular, $c(N,T)$ is nonzero only if $(1+a_it_i,d_i)=1$ and $(d_i,n_i)\mid a_i$ for all $i\in\{1,\ldots,r\}$. The constant implied by the $O$-term depends only on $K$, $F$, the $\alpha_i$'s and the $d_i$'s.
\end{thm}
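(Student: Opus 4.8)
The plan is to trade the condition on the multiplicative order for a splitting condition by way of the index, expand by M\"obius inversion, recognise each resulting counting function as a Chebotarev count, and then balance the effective Chebotarev density theorem under GRH against an elementary estimate for the tail of the expansion.

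Since $\N\p-1=\ord_\p(\alpha_i)\ind_\p(\alpha_i)$, writing $t_i:=\ind_\p(\alpha_i)$ the congruence $\ord_\p(\alpha_i)\equiv a_i\bmod d_i$ is equivalent to $\N\p\equiv 1+a_it_i\bmod d_it_i$. Moreover $n_it_i\mid\ind_\p(\alpha_i)$ is equivalent to $\p$ splitting completely in $K(\zeta_{n_it_i},\alpha_i^{1/n_it_i})$, and $\sum_{n\mid m}\mu(n)$ equals $1$ for $m=1$ and $0$ otherwise; summing these divisibility conditions over $n_i$ for each fixed $t_i$ isolates the primes with $\ind_\p(\alpha_i)=t_i$, and then summing over $T$ yields
\[
\mc P(x)=\sum_T\sum_N\Bigl(\prod_i\mu(n_i)\Bigr)\pi_{N,T}(x),
\]
where $\pi_{N,T}(x)$ is the number of $\p$ with $\N\p\leq x$ that split completely in $K_{N,T}$, whose Frobenius sends $\zeta_{d_it_i}\mapsto\zeta_{d_it_i}^{1+a_it_i}$ for every $i$, and with $(\p,F/K)\subseteq C$; for fixed $\p$ the $N$-sum is finite since $\ind_\p(\alpha_i)\leq\N\p-1$. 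These three conditions describe exactly the primes whose Frobenius conjugacy class in $\Gal(F_{w,N,T}/K)$ lies in the set of cardinality $c(N,T)$ of the statement. That set is a union of conjugacy classes because $K_{N,T}/K$, $K(\zeta_w)/K$ and $F/K$ are Galois, and it is empty unless $1+a_it_i$ is a unit modulo $d_i$ (so that $\zeta_{d_it_i}\mapsto\zeta_{d_it_i}^{1+a_it_i}$ defines an automorphism) and, because $\zeta_{(d_i,n_i)t_i}$ lies in both $K(\zeta_{d_it_i})$ and $K_{N,T}$, unless $1+a_it_i\equiv1\bmod(d_i,n_i)t_i$, i.e.\ $(d_i,n_i)\mid a_i$; this proves the last assertion.

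Fix a cutoff $\xi=\xi(x)$ and split the double sum into the tuples with $n_i,t_i\leq\xi$ for all $i$ and the rest. To each tuple of the first kind apply the effective Chebotarev density theorem for $F_{w,N,T}/K$ available under GRH: $\pi_{N,T}(x)=\frac{c(N,T)}{[F_{w,N,T}:K]}\Li(x)+O\bigl(\sqrt x(\log|d_{F_{w,N,T}}|+[F_{w,N,T}:K]\log x)\bigr)$. By Kummer theory, $[F_{w,N,T}:K]$ is at most a constant depending only on $K$, $F$ and the $\alpha_i$ times $(\prod_i n_it_i)^2\prod_i d_it_i$, and $\log|d_{F_{w,N,T}}|$ is bounded by a fixed power of this degree and of the rational primes ramified in $F_{w,N,T}$ (which divide $\prod_i n_it_id_i$, the $\alpha_i$, or the discriminant of $F$); summing over the $O(\xi^{2r})$ tuples leaves a total error $O(\sqrt x\,\xi^{A}\log x)$ with $A=A(r)$, and replacing $\Li(x)$ by $x/\log x$ costs $O(x/(\log x)^2)$. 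Since $c(N,T)\leq[F_{w,N,T}:K(\zeta_w)K_{N,T}]\leq[F:K]$ is bounded while $[F_{w,N,T}:K]\geq[K_{N,T}:K]\gg\varphi([n_1t_1,\dots,n_rt_r])\prod_i n_it_i$ by Kummer theory, the series $\sum_T\sum_N(\prod_i\mu(n_i))c(N,T)/[F_{w,N,T}:K]$ converges absolutely, and its tail over the tuples with $\max_i n_it_i>\xi$ is $O(\xi^{-1+\varepsilon})$; this contributes $O\bigl((x/\log x)\xi^{-1+\varepsilon}\bigr)$.

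There remains the tail of the expansion over the tuples with some $n_i>\xi$ or $t_i>\xi$, for which a term-by-term use of Chebotarev is useless because up to a power of $x$ many tuples occur. Here one bounds $\pi_{N,T}(x)$ from above by a count of primes $\p\leq x$ at which $\alpha_i$ is an $(n_it_i)$-th power, for the index $i$ with $n_it_i>\xi$, and splits the range of the modulus $\ell:=n_it_i$ into a medium and a large part, as done by Hooley and by Ziegler: when $\ell$ exceeds a suitable power of $x$, the order $\ord_\p(\alpha_i)$ is forced below a power of $x$, so $\p$ divides one of boundedly many fixed algebraic integers and such primes are rare; in the medium range one combines the congruence $\N\p\equiv1\bmod\ell$ with the power-residue condition at $\p$ and a Brun--Titchmarsh-type estimate. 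This is the main obstacle, and it is where the exponent is lost: the medium-range bound degrades with $r$ because the $r$ index conditions must be controlled at once, and optimising $\xi$ against it yields a tail of size $O\bigl(x/(\log x)^{1+1/(r+1)}\bigr)$, which absorbs the other error terms. Collecting the main term $\frac{x}{\log x}\sum_T\sum_N(\prod_i\mu(n_i))c(N,T)/[F_{w,N,T}:K]$ together with these errors gives \eqref{eq-P}; the implied constant depends only on $K$, $F$, the $\alpha_i$ and the $d_i$ because only these enter the degree and discriminant bounds and the elementary sieve estimates.
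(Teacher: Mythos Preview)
Your overall plan---translate the order condition into an index condition plus a congruence, apply M\"obius inversion, identify each $\pi_{N,T}(x)$ as a Chebotarev count in $F_{w,N,T}$, and balance effective Chebotarev against a Hooley--Ziegler tail---is the paper's strategy, and your verification that $c(N,T)=0$ unless $(1+a_it_i,d_i)=1$ and $(d_i,n_i)\mid a_i$ is correct. However, your treatment of the tail has a genuine gap, and your single-cutoff scheme obscures the source of the exponent $1+\tfrac{1}{r+1}$.

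The problem is your sentence ``one bounds $\pi_{N,T}(x)$ from above by a count of primes \ldots\ and splits the range of the modulus $\ell:=n_it_i$''. Bounding a M\"obius-signed sum term by term forces you to control $\sum_{(N,T):\,\text{some }n_i>\xi\text{ or }t_i>\xi}\pi_{N,T}(x)$, and for a fixed prime $\p$ the number of tuples $(N,T)$ with $n_jt_j\mid\ind_\p(\alpha_j)$ for every $j$ is a product of divisor-type functions of the $\ind_\p(\alpha_j)$, hence of size $x^\varepsilon$; multiplying your Hooley--Ziegler bound for the distinguished index $i$ by this overcount destroys the saving. The paper avoids this by working in two stages. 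First, for each fixed $T$ it rewrites the truncated M\"obius sum as an honest \emph{counting function}: one sets $\mc M_\xi(x)=\#\{\p:t_i\mid\ind_\p(\alpha_i)\text{ and }t_iq\nmid\ind_\p(\alpha_i)\ \forall q<\xi\}$ and proves the sandwich $\mc V_T(x)=\mc M_\xi(x)+O(\mc M_{\xi,\eta}(x))$, where $\mc M_{\xi,\eta}(x)$ is itself a non-negative prime count to which Ziegler's Lemmas~9--11 apply directly (this is the paper's Theorem~\ref{thm-index1}, via formula~\eqref{eq4} and Lemma~\ref{lem-index-error}). Only \emph{then} does one sum over $T$ with a separate cutoff $y=(\log x)^\rho$, bounding the $T$-tail by $\sum_i\#\{\p:\ind_\p(\alpha_i)>y\}=O(x/(\log x)^{1+\rho})$ (Lemma~\ref{lem-index}).

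This two-stage structure is also what produces the exponent. The per-$T$ error from Theorem~\ref{thm-index1} is $O(x/\log^2 x)$ (plus a term with $1/\varphi(t_i)$, handled by $\sum_{t<y}1/\varphi(t)=O(\log y)$); summing it over the $\leq y^r$ small tuples $T$ gives $O(x/(\log x)^{2-r\rho})$. Balancing $1+\rho=2-r\rho$ yields $\rho=\tfrac{1}{r+1}$. So the $r$-dependence does not come from any degradation of the medium-range sieve itself, but simply from the $y^r$-fold multiplicity in the $T$-sum; your explanation that ``the medium-range bound degrades with $r$ because the $r$ index conditions must be controlled at once'' misidentifies the mechanism.
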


\begin{rem}
The multiple series involved in the asymptotic formula \eqref{eq-P} converges. This statement is a consequence of the results of Section \ref{sec-euler}: more precisely, the convergence follows by Proposition \ref{prop-PS1} and Theorem \ref{pollack}, and we prove this property in Corollary \ref{cor-convergence2}. Notice that the same remark applies to the series in the formulas \eqref{r-asymptotic} and \eqref{eq-S} below (see also Corollary \ref{cor-convergence1}).
\end{rem}

\begin{thm}\label{thm-index1} 
Let $F/K$ be a finite Galois extension, and let $C$ be a union of conjugacy classes of $\Gal(F/K)$. 
Let $T=(t_1,\ldots,t_r)$ be an $r$-tuple of positive integers.  
Define the following set of primes of $K$:
\begin{equation}
\mc R :=\set{\p: \ind_\p(\alpha_i)=t_i\ \forall i,\, \begin{pmatrix}\p \\ F/K  \end{pmatrix}\subseteq C  }\,.
\label{r-set}
\end{equation}

Assuming (GRH), and supposing that $x\geq t_i^3$  for all $i$, we have
\begin{equation}\label{r-asymptotic}
\mc R(x)= \frac{x}{\log x}\sum_{N}\frac{(\prod_{i}\mu(n_i))c'(N,T)}{[F_{N,T}:K]} +\bo{\frac{x}{\log^{2}x}+\sum_{i=1}^r\frac{x\log\log x}{\varphi(t_i)\log^2x}},
\end{equation}
where 
\begin{equation}\label{c-fct}
c'(N,T)=\cset{\sigma\in\Gal(F_{N,T}/K):\, \sigma|_{K_{N,T}}=\id,\,\sigma|_F\in C}\,.
\end{equation}
The constant implied by the $O$-term  depends only on $K,F$ and the $\alpha_i$'s.
\end{thm}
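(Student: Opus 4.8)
The plan is to express the condition $\ind_\p(\alpha_i) = t_i$ as a union of Chebotarev-type conditions and then apply the effective Chebotarev density theorem under GRH. First I would recall the standard characterization: for a prime $\p$ with $\N\p = q$, one has $\ord_\p(\alpha_i) = n$ if and only if $\p$ splits completely in $K(\zeta_n, \alpha_i^{1/n})$ but does not split completely in $K(\zeta_{n\ell}, \alpha_i^{1/n\ell})$ for any prime $\ell$. Consequently $\ind_\p(\alpha_i) = t_i$ means exactly that $t_i \mid (\N\p - 1)$, that $\p$ splits completely in $K(\zeta_{t_i}, \alpha_i^{1/t_i})$, and that for every prime $\ell$ the prime $\p$ does \emph{not} split completely in $K(\zeta_{t_i\ell}, \alpha_i^{1/t_i\ell})$. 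Combining over all $i$ and intersecting with the Frobenius condition $(\p, F/K) \subseteq C$, I would use inclusion–exclusion (M\"obius inversion over the multi-index $N$, with $n_i$ squarefree) to write
\[
\mc R(x) = \sum_{N} \Big(\prod_i \mu(n_i)\Big)\, \pi_{C,N,T}(x),
\]
where $\pi_{C,N,T}(x)$ counts primes $\p$ with norm at most $x$ that split completely in $K_{N,T}$ and whose Frobenius in $F$ lands in $C$; this is precisely a Chebotarev count for the field $F_{N,T}$ with the class function whose size is $c'(N,T)$ as in \eqref{c-fct}.

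Next I would apply the effective Chebotarev density theorem (the GRH-conditional version, as in \cite[Theorem 2]{zieg}) to each $\pi_{C,N,T}(x)$, obtaining
\[
\pi_{C,N,T}(x) = \frac{c'(N,T)}{[F_{N,T}:K]}\,\Li(x) + O\big(\sqrt{x}\,\log(d_{F_{N,T}} x^{[F_{N,T}:\Q]})\big),
\]
and then I would have to truncate the sum over $N$ at some cutoff depending on $x$, control the tail, and bound the accumulated error terms. The degrees $[F_{N,T}:K]$ grow roughly like $\prod_i n_i^2 t_i^2$ up to Kummer-theoretic corrections, and the discriminant of $F_{N,T}$ is bounded polynomially in the $n_i t_i$ and in terms of data depending only on $K$, $F$ and the $\alpha_i$'s; standard estimates (e.g. the conductor-discriminant formula together with bounds on ramified primes) give $\log d_{F_{N,T}} \ll [F_{N,T}:\Q]\log([F_{N,T}:\Q] \cdot B)$ for a constant $B$. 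Summing the main terms over all $N$ and adding back the tail contributes the series in \eqref{r-asymptotic}; the error analysis, carefully balancing the truncation level against the $\sqrt{x}$-type Chebotarev errors and using $\sum_N 1/\prod_i n_i \ll (\log\log)$-type bounds after restricting to $n_i$ with $(d_i, n_i)$ conditions, produces the stated $O$-term. The hypothesis $x \geq t_i^3$ is what allows the truncation/tail split to be done uniformly; the factor $x\log\log x / (\varphi(t_i)\log^2 x)$ in the error comes from estimating the contribution of primes $\p$ with $t_i \mid \N\p - 1$ whose behaviour in the relevant Kummer extensions is not yet fully determined at the truncation boundary, i.e.\ from the "near-split" primes for each coordinate.

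The main obstacle, as usual in this circle of problems, is the \textbf{uniform control of the tail of the multi-index sum} and the interplay between the number of $N$'s one can afford to treat via effective Chebotarev and the size of the error each contributes. Concretely: the Chebotarev error for a single $N$ is roughly $\sqrt{x}\cdot [F_{N,T}:\Q]\log(\cdots)$, so one can only sum over $N$ with $\prod_i n_i t_i$ up to about $x^{1/4}/(\log x)^{\text{something}}$; beyond that one must bound $\pi_{C,N,T}(x)$ trivially (it is at most the number of primes splitting completely in $K_{N,T}$, hence $\ll x/(\prod_i n_i^2 t_i^2 \log x)$ unconditionally via a simpler sieve, or using the Brun–Titchmarsh-type bound for the largest relevant abelian subextension). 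Making these two regimes match up and collecting the losses into exactly $O\big(x/\log^2 x + \sum_i x\log\log x/(\varphi(t_i)\log^2 x)\big)$ is the delicate part, and it is where the dependence of the implied constant on only $K$, $F$ and the $\alpha_i$'s (and not on the $t_i$) has to be checked — in particular, the Kummer-degree lower bounds $[K_{N,T}:K(\zeta_{[n_it_i]})] \gg \prod_i n_i t_i / c_0$ for a constant $c_0 = c_0(K,\alpha_1,\ldots,\alpha_r)$ coming from the theory of the $\ell$-adic failure of maximality of Kummer extensions must be invoked. I would structure the write-up so that all such uniform degree and discriminant bounds are quoted first, then the M\"obius/inclusion–exclusion reduction, then the two-regime estimate, and finally the assembly of main term and error.
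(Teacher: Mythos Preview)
Your overall architecture matches the paper's: M\"obius inversion over the multi-index $N$, effective Chebotarev under GRH for the truncated range, degree and discriminant bounds from Kummer theory to control the accumulated Chebotarev errors, and a separate argument for the tail. The paper uses the truncation parameter $\xi=\frac{1}{6r}\log x$ on the \emph{prime factors} of the $n_i$ (so the relevant set $E$ of squarefree integers has $|E|\le x^{1/6r}$), which is what keeps the $\sqrt{x}$-type errors under $x^{2/3}\log x$; your proposed cutoff at $\prod_i n_it_i\approx x^{1/4}$ is a different bookkeeping but would also work at this stage.

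The genuine gap is in your treatment of the tail. You assert an unconditional bound $\pi_{C,N,T}(x)\ll x/(\prod_i n_i^2t_i^2\log x)$ ``via a simpler sieve, or using the Brun--Titchmarsh-type bound for the largest relevant abelian subextension''. This is too optimistic: Brun--Titchmarsh only sees the cyclotomic part, giving a denominator of size $\varphi([n_it_i])$, not the full Kummer degree $\varphi([n_it_i])\prod_i n_it_i$. No elementary sieve supplies the extra factor $\prod_i n_it_i$ unconditionally, and without it the tail sum does not converge to the stated error. The paper avoids this entirely by not summing trivial bounds over large $N$. Instead it writes $\mc R(x)=\mc M_\xi(x)+O(\mc M_{\xi,\eta}(x))$, where $\mc M_{\xi,\eta}(x)$ counts primes with $t_iq\mid\ind_\p(\alpha_i)$ for \emph{some} $i$ and \emph{some} prime $\xi\le q<x$; a union bound over $i$ reduces this to $r$ separate rank-$1$ problems, each of which is exactly what Ziegler handles in \cite[Lemmas 9--11]{zieg}. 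Those lemmas split the range of $q$ into three regimes and, crucially, for large $q$ use the divisibility $\p\mid(\beta_i^{(\N\p-1)/t_iq}-\gamma_i^{(\N\p-1)/t_iq})$ (writing $\alpha_i=\beta_i/\gamma_i$ with $\beta_i,\gamma_i\in\OO_K$) together with a height argument, not Chebotarev or sieves. This is where the precise shape $x\log\log x/(\varphi(t_i)\log^2 x)$ comes from, and it is the step your proposal does not supply.
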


Notice that applying Theorem \ref{thm-index1} with $t_i=1$ for all $i$ and $F=K$ (which gives $c'(N,T)=1$ for all $N$) yields a multidimensional variant of Artin's Conjecture on primitive roots over number fields.

\begin{thm}\label{thm-index} 
Let $F/K$ be a finite Galois extension, and let $C$ be a union of conjugacy classes of $\Gal(F/K)$. 
For $1\leq i\leq r$, let $S_i$ be some nonempty sets of positive integers.
Define the following set of primes of $K$:
\[
\mc S :=\set{\p: \ind_\p(\alpha_i)\in S_i\ \forall i ,\, \begin{pmatrix}\p \\ F/K  \end{pmatrix}\subseteq C  }\,.
\]

Assuming (GRH), we have
\begin{equation}\label{eq-S}
\mc S(x)=\frac{x}{\log x}\sum_{\substack{T\\ t_i\in S_i }}
	    	\sum_{N } 
	    	\frac{(\prod_{i}\mu(n_i))c'(N,T)}{[F_{N,T}:K]}
			+\bo{\frac{x}{(\log x)^{1+\frac{1}{r+1}}}}\,,
\end{equation}
where
\[ c'(N,T)=\cset{\sigma\in\Gal(F_{N,T}/K):\, \sigma|_{K_{N,T}}=\id,\,\sigma|_F\in C}\,. \]
The constant implied by the $O$-term depends only on $K$, $F$ and the $\alpha_i$'s.
\end{thm}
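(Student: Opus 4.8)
The plan is to deduce Theorem~\ref{thm-index} from Theorem~\ref{thm-index1} by summing over all multi-indices $T=(t_1,\dots,t_r)$ with $t_i\in S_i$. Write $\mc R_T(x)$ for the counting function of the set \eqref{r-set} attached to $T$, so that $\mc S(x)=\sum_{T:\,t_i\in S_i}\mc R_T(x)$ (for fixed $x$ only finitely many terms are nonzero, since $\ind_\p(\alpha_i)<\N\p$), and put $\delta(T):=\sum_N\frac{(\prod_i\mu(n_i))c'(N,T)}{[F_{N,T}:K]}$ for the series appearing in \eqref{r-asymptotic}; the goal is then $\mc S(x)=\frac{x}{\log x}\sum_{T:\,t_i\in S_i}\delta(T)+\bo{x/(\log x)^{1+\frac1{r+1}}}$. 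I would fix the threshold $Y=(\log x)^{\frac1{r+1}}$ and split the sum over $T$ at $\max_it_i=Y$.

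For the tuples with $t_i\le Y$ for all $i$ (so in particular $t_i\le x^{1/3}$ once $x$ is large), Theorem~\ref{thm-index1} applies, and summing \eqref{r-asymptotic} over the at most $Y^r$ relevant tuples produces $\frac{x}{\log x}\sum_{t_i\in S_i,\ t_i\le Y}\delta(T)$ with a total error $\ll Y^r\frac{x}{\log^2x}+\big(\sum_i\sum_{t_i\le Y}\tfrac1{\varphi(t_i)}\big)Y^{r-1}\frac{x\log\log x}{\log^2x}$, which is $\bo{x/(\log x)^{1+\frac1{r+1}}}$ by the choice of $Y$ and $\sum_{t\le Y}1/\varphi(t)\ll\log Y$; this is the constraint that forces $Y$ to be no larger than $(\log x)^{\frac1{r+1}}$. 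It remains to pass from $\sum_{t_i\in S_i,\ t_i\le Y}\delta(T)$ to the full series, and the difference is controlled by $\sum_{T:\,\max_it_i>Y}\sum_N\frac{|\prod_i\mu(n_i)|c'(N,T)}{[F_{N,T}:K]}$, whose convergence together with its decay in $Y$ is exactly the content of the Euler-product analysis of Section~\ref{sec-euler} (Proposition~\ref{prop-PS1}, Theorem~\ref{pollack}, Corollary~\ref{cor-convergence1}); after multiplication by $x/\log x$ this again contributes $\bo{x/(\log x)^{1+\frac1{r+1}}}$.

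Finally I would bound $\sum_{T:\,\max_it_i>Y,\ t_i\in S_i}\mc R_T(x)$. A prime counted here splits completely in $K_{(1,\dots,1),T}=K(\zeta_{[t_1,\dots,t_r]},\alpha_1^{1/t_1},\dots,\alpha_r^{1/t_r})$, whose degree over $K$ is $\gg\varphi([t_1,\dots,t_r])\,t_1\cdots t_r$ by Kummer theory for the multiplicatively independent $\alpha_i$; hence $\mc R_T(x)\le x/[K_{(1,\dots,1),T}:K]$, and $\sum_{T:\,\max_it_i>Y}1/[K_{(1,\dots,1),T}:K]$ converges and decays in $Y$ like a fixed power. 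For the $T$ with $t_1\cdots t_r$ below a small power of $x$ one sharpens this by applying the effective Chebotarev density theorem under (GRH) to $K_{(1,\dots,1),T}$ — the degree and discriminant of this field being bounded by a small power of $x$, so that the accumulated Chebotarev errors stay negligible — which restores the saving $1/\log x$ and makes the contribution of this whole range $\bo{x/(\log x)^{1+\frac1{r+1}}}$; the remaining $T$ are already handled by the trivial bound. Collecting the three contributions yields \eqref{eq-S}. The principal obstacle is the balancing of $Y$: Theorem~\ref{thm-index1} must be invoked over $\asymp Y^r$ tuples with an $O(x/\log^2x)$ term per tuple that does not shrink with $T$, forcing $Y$ small, while the large-index tail — even after the $1/\log x$ saving is recovered — forces $Y$ to be a comparable power of $\log x$, and $1+\frac1{r+1}$ is precisely the exponent at which these opposing requirements meet.
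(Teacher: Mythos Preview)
Your overall strategy --- splitting at $Y=(\log x)^{1/(r+1)}$, applying Theorem~\ref{thm-index1} to the tuples with all $t_i\le Y$, and extending the truncated density sum to the full series via Corollary~\ref{cor-bound} and Theorem~\ref{pollack} --- matches the paper's proof exactly, and your bookkeeping of the accumulated error terms from \eqref{r-asymptotic} is correct.

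The gap is in your third paragraph, where you bound $\sum_{T:\max_i t_i>Y}\mc R_T(x)$. The assertion $\mc R_T(x)\le x/[K_{(1,\dots,1),T}:K]$ is not a trivial bound: it is precisely the main term of effective Chebotarev \emph{with the error discarded}, and there is no unconditional pointwise inequality of this shape once $[K_{(1,\dots,1),T}:K]$ is large relative to $x$. Your proposed two-range split then fails to balance. On the Chebotarev side, the error per tuple is $\gg\sqrt{x}\log x$ (after the discriminant estimate of Proposition~\ref{prop-discr}), and there are $\gg x^{\delta}$ tuples with $\prod_i t_i\le x^{\delta}$, so the accumulated error forces $\delta<1/2$. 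On the other side, the only genuinely elementary input for very large indices --- bounding the number of $\p$ with small $\ord_\p(\alpha_i)$ via the prime ideal divisors of (the numerator of) $\alpha_i^m-1$ --- gives at best $O(x^{2(1-\delta/r)})$ primes with some $\ind_\p(\alpha_i)>x^{\delta/r}$, which is useful only for $\delta>r/2$. These ranges do not overlap for any $r\ge 1$.

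The paper avoids this by exploiting the disjointness of the $\mc R_T$: one has
\[
\sum_{T:\max_i t_i>Y}\mc R_T(x)\ \le\ \sum_{i=1}^r\#\{\p:\N\p\le x,\ \ind_\p(\alpha_i)>Y\},
\]
and each summand is $O(x/(\log x)^{1+\rho})$ by Lemma~\ref{lem-index} (a variant of Ziegler's Lemma~13). The proof of that lemma is itself a three-range argument --- effective Chebotarev, then Brun--Titchmarsh in an intermediate zone, then the order bound --- and it is this extra intermediate range that makes the balancing work. Replacing your third paragraph by an appeal to Lemma~\ref{lem-index} (or reproducing its three-range split for a single $\alpha_i$) closes the gap.
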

In particular, we may choose $S_i$ to be the set  of positive integers lying in an arithmetic progression, say for instance $\{k\geq1:k\equiv a_i\bmod d_i\}$, with $a_i$ and $d_i\geq2$ integers.

Notice that if we take $r=1$ in Theorems \ref{main} and \ref{thm-index1}, then we obtain the same formulas as in \cite[Theorem 1, Proposition 1]{zieg}, respectively.

\subsection{Overview}
In order to generalize  Zielger's proofs \cite{zieg} to obtain Theorems \ref{main}, \ref{thm-index1} and \ref{thm-index}, some crucial results are needed. The first one is Theorem \ref{pollack} which is an estimate for a multiple series involving the Euler's totient function $\varphi$. Section \ref{sec-euler} is devoted to the proof of this theorem. The other two results concern Kummer extensions of number fields and are proven in Section \ref{sec-kummer}. More precisely, Proposition \ref{prop-PS1} states that the failure of maximality of their degree is bounded in a strong way, whereas Proposition \ref{prop-discr}  gives an estimate for their discriminant. All these results will be used to deal with the asymptotics of the sets of primes considered in Section \ref{sec-main}.

In Section \ref{sec-index1} we prove Theorem \ref{thm-index1}, and then we use this result in Section \ref{sec-index2} to set more general conditions on the index and achieve Theorem \ref{thm-index}. In Section \ref{sec-order} we prove Theorem \ref{main} by transforming the conditions on the order into conditions on the index and on the Frobenius conjugacy class with respect to certain finite Galois extensions,  thus allowing to apply the previous results.

\section{On the Euler's totient function}\label{sec-euler}
In this section we estimate some expressions involving the Euler's totient function. We keep the notation introduced in Section \ref{notation}. In particular, we  write $N=(n_1,\ldots,n_r)$ for a multi-index (whose components are positive integers). Also we denote by $\tau(n)$ the number of positive divisors of $n$.

Recall the following well-known estimates:
\begin{equation}\label{tau}
\tau(n)=\bo{n^\varepsilon} \quad \forall \varepsilon>0 \,,
\end{equation}
(see \cite[Formula (2.20)]{MultNT});
\begin{equation}\label{phi2}
\frac{n}{\varphi(n)}=\bo{n^\varepsilon} \quad \forall \varepsilon>0\,,
\end{equation}
which follows by noticing that for each prime $p$ there is a constant $c_\varepsilon>0$ such that $(1-1/p)\geq c_\varepsilon/p^\varepsilon$, and we may take $c_\varepsilon=1$ for all $p$ sufficiently large (with respect to $\varepsilon$);
\begin{equation}\label{phi1}
\sum_{n\leq x}\frac{n}{\varphi(n)}=\bo{x}\,,
\end{equation}
(see for instance \cite[Formula (2.32)]{MultNT}).

Our goal is to prove a multidimensional variant of the estimate $\sum_{n>x}\frac{1}{\varphi(n)n}=O\big(\frac{1}{x}\big)$  
(see for instance \cite[Lemma 7]{zieg}), namely
\begingroup
\makeatletter
\apptocmd{\thethm}{\unless\ifx\protect\@unexpandable@protect\protect\footnote{This result and its proof are due to Paul Pollack, see also the acknowledgments.}\fi}{}{}
\makeatother
\begin{thm}
\label{pollack}
We have
\begin{equation}\label{pollack-eq}
\sum_{\substack{N\\ n_1>x}}
	\frac{1}{\varphi([n_1,n_2,\ldots,n_r])n_1n_2\cdots n_r}=\bo{\frac{1}{x}}\,.
\end{equation}
\end{thm}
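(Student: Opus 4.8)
The plan is to reduce the $r$-dimensional sum to a convergent $(r-1)$-dimensional one; the case $r=1$ is exactly the classical estimate $\sum_{n>x}\frac1{\varphi(n)n}=O(1/x)$ recalled above (see \cite[Lemma 7]{zieg}), which I will use in the form $\sum_{a>t}\frac1{\varphi(a)a}\ll\frac1{1+t}$, valid for all real $t\geq0$. Throughout I would use freely the estimates \eqref{tau} and \eqref{phi2}, the elementary inequality $\varphi(ab)\geq\varphi(a)\varphi(b)$, and the trivial bound $[n_2,\dots,n_r]\leq n_2\cdots n_r$.

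For $r\geq2$, I would fix $n_2,\dots,n_r\geq1$ and set $m:=[n_2,\dots,n_r]$. The crucial step is to parametrise the innermost variable $n_1$ by the pair $(g,a)$ with $g:=(n_1,m)$ and $n_1=ga$: one checks that $n_1\mapsto(g,a)$ is a bijection onto $\{(g,a):g\mid m,\ (a,m/g)=1\}$ and that under it $[n_1,n_2,\dots,n_r]=[n_1,m]=am$, so that $\varphi([n_1,n_2,\dots,n_r])=\varphi(am)\geq\varphi(a)\varphi(m)$. Dropping the coprimality constraint, the sum over $n_1>x$ is then at most
\[
\frac1{\varphi(m)\,n_2\cdots n_r}\sum_{g\mid m}\frac1g\sum_{a>x/g}\frac1{\varphi(a)a}\ \ll\ \frac1{\varphi(m)\,n_2\cdots n_r}\sum_{g\mid m}\frac1{g+x}\ <\ \frac{\tau(m)}{x\,\varphi(m)\,n_2\cdots n_r}\,,
\]
using $\frac1g\cdot\frac1{1+x/g}=\frac1{g+x}<\frac1x$ and the fact that $m$ has $\tau(m)$ divisors.

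Summing this over $n_2,\dots,n_r$, it then remains to show that $\sum_{n_2,\dots,n_r}\frac{\tau([n_2,\dots,n_r])}{\varphi([n_2,\dots,n_r])\,n_2\cdots n_r}$ converges. For this I would use that $m\geq n_i$ for each $i$, hence $m^{r-1}\geq n_2\cdots n_r$, together with \eqref{tau}, \eqref{phi2} and $m\leq n_2\cdots n_r$, to obtain for every $\varepsilon>0$
\[
\frac{\tau(m)}{\varphi(m)}\ \ll\ \frac{(n_2\cdots n_r)^{\varepsilon}}{m^{1-\varepsilon}}\ \ll\ (n_2\cdots n_r)^{\,\varepsilon-\frac{1-\varepsilon}{r-1}}\,.
\]
Choosing $\varepsilon<1/r$ makes $\delta:=\frac{1-\varepsilon}{r-1}-\varepsilon>0$, so the general term of the series is $\ll\prod_{i=2}^r n_i^{-1-\delta}$ and the series is dominated by $\zeta(1+\delta)^{r-1}<\infty$. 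Putting the pieces together yields that the left-hand side of \eqref{pollack-eq} is $O(1/x)$, with implied constant depending only on $r$.

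The step I expect to be the main obstacle --- really the only non-mechanical one --- is the parametrisation $n_1=ga$ with $g=(n_1,m)$: it is what lets one replace the intractable $\varphi([n_1,\dots,n_r])$ by a product $\varphi(a)\varphi(m)$ in which the new variable $a$ is genuinely decoupled from the accumulated modulus $m$, at the cost of a factor $\tau(m)$ that is subsequently absorbed by the power saving in $1/\varphi(m)$ coming from $m\geq(n_2\cdots n_r)^{1/(r-1)}$. Everything else reduces to routine applications of the standard estimates recalled at the beginning of this section.
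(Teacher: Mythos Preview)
Your argument is correct. The parametrisation $n_1=ga$ with $g=(n_1,m)$ is valid as stated (one checks prime-by-prime that $g\mid m$ and $(a,m/g)=1$ characterise exactly the pairs arising this way), the bound $\varphi(am)\geq\varphi(a)\varphi(m)$ then cleanly decouples the inner sum, and the convergence of $\sum_{n_2,\dots,n_r}\tau(m)/(\varphi(m)\,n_2\cdots n_r)$ follows from \eqref{tau}, \eqref{phi2} and $m\geq(n_2\cdots n_r)^{1/(r-1)}$ exactly as you indicate.

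Your route differs from the paper's in its overall architecture. The paper first proves an auxiliary partial-sum estimate (Lemma~\ref{phi-lem2}: $\sum_{n_1\leq X}\frac{n_1}{\varphi([n_1,\dots,n_r])n_2\cdots n_r}=O(X)$), itself resting on Lemma~\ref{phi-lem1}, and then recovers the tail bound by a dyadic decomposition of the range $n_1>x$. You instead bypass both lemmas and the dyadic step by reducing directly to the known $r=1$ tail estimate via the substitution $n_1=ga$. The underlying combinatorial device is the same --- splitting according to the gcd with $m=[n_2,\dots,n_r]$ (this is precisely what drives Lemma~\ref{phi-lem1}) --- and both arguments ultimately exploit the lower bound $m\geq(n_2\cdots n_r)^{1/(r-1)}$ to absorb the losses. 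Your version is shorter and more self-contained, trading the intermediate Lemma~\ref{phi-lem2} (which the paper does not use elsewhere) for the classical one-variable bound $\sum_{a>t}1/(\varphi(a)a)\ll 1/(1+t)$ taken as input.
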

\endgroup

\begin{lem}\label{phi-lem1}
Let $z$ be a positive integer, then for every $\varepsilon>0$ we have
\[ \sum_{n\leq x}(n,z)\cdot\frac{n}{\varphi(n)}=\bo{xz^\varepsilon}\,. \]
\end{lem}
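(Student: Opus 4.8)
The plan is to bound the sum by splitting it according to the value $e = (n,z)$ of the gcd, which must be a divisor of $z$. Writing $n = e m$ with $(m, z/e)$... actually more simply, for each divisor $e \mid z$ the condition $(n,z) = e$ forces $e \mid n$, so write $n = em$ and sum over all $m \leq x/e$ (dropping the coprimality refinement, which only helps). This gives
\[
\sum_{n \leq x} (n,z) \cdot \frac{n}{\varphi(n)} \leq \sum_{e \mid z} e \sum_{\substack{m \leq x/e}} \frac{em}{\varphi(em)}\,.
\]

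The key step is then to control $\frac{em}{\varphi(em)}$ in terms of $\frac{m}{\varphi(m)}$. Since $\varphi$ is multiplicative and $\varphi(p^k) \geq \varphi(p^{k-1})$, one has the elementary inequality $\varphi(em) \geq \varphi(e)\varphi(m)$, hence $\frac{em}{\varphi(em)} \leq \frac{e}{\varphi(e)} \cdot \frac{m}{\varphi(m)}$. Combined with \eqref{phi1}, namely $\sum_{m \leq y} \frac{m}{\varphi(m)} = O(y)$, the inner sum is $O(x/e)$, so the $e$-th term contributes
\[
e \cdot \frac{e}{\varphi(e)} \cdot \bo{\frac{x}{e}} = \bo{x \cdot \frac{e}{\varphi(e)}}\,.
\]
Summing over $e \mid z$ and applying \eqref{phi2} (so $\frac{e}{\varphi(e)} = \bo{e^\varepsilon} = \bo{z^\varepsilon}$) together with the divisor bound \eqref{tau} ($\tau(z) = \bo{z^\varepsilon}$), the total is $\bo{x \cdot z^\varepsilon \cdot \tau(z)} = \bo{x z^{2\varepsilon}}$. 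Since $\varepsilon > 0$ is arbitrary this is $\bo{xz^\varepsilon}$, as claimed (after relabelling $\varepsilon/2 \mapsto \varepsilon$ at the outset).

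I do not anticipate a serious obstacle here; the only mild subtlety is being careful that all implied constants depend only on $\varepsilon$ (not on $z$ or $x$), which is automatic since \eqref{phi1}, \eqref{phi2} and \eqref{tau} all have that property. One could instead sharpen the gcd-splitting by writing $n = em$ with $(m, z/e) = 1$ to get an exact partition, but this buys nothing for the stated bound, so I would keep the cruder inequality for brevity.
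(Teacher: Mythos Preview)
Your proof is correct and follows essentially the same approach as the paper: split the sum according to the divisor $e=(n,z)$ of $z$, drop the coprimality constraint, use $\varphi(em)\geq\varphi(e)\varphi(m)$ together with \eqref{phi1} to bound the inner sum, and then finish with \eqref{tau} and \eqref{phi2} applied with $\varepsilon/2$. The paper's proof is identical in structure and in the estimates used.
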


\begin{proof}
We have
\begin{align*}
&\sum_{n\leq x}(n,z)\cdot\frac{n}{\varphi(n)}
	=\sum_{d\mid z}\sum_{\substack{n\leq x\\ (n,z)=d}}d\cdot\frac{n}{\varphi(n)} \\
	\leq& \sum_{d\mid z} d\cdot \sum_{m\leq x/d} \frac{md}{\varphi(md)} 
	 \leq \sum_{d\mid z} \frac{d^2}{\varphi(d)} \sum_{m\leq x/d} \frac{m}{\varphi(m)}\,.
\end{align*}
Then the formula \eqref{phi1} yields
\[ \sum_{n\leq x}(n,z)\cdot\frac{n}{\varphi(n)}=O\Bigg(x\sum_{d\mid z} \frac{d}{\varphi(d)}\Bigg)\,. \]
We may then conclude by using  \eqref{tau} and \eqref{phi2} with $\varepsilon/2$ to get 
\[ \sum_{d\mid z} \frac{d}{\varphi(d)}=\bo{\tau(z)\cdot\max_{d\mid z}\frac{d}{\varphi(d)}}
										=\bo{z^\varepsilon}\,. \qedhere  \]
\end{proof}

\begin{lem}\label{phi-lem2}
We have
\[ \sum_{\substack{N\\ n_1\leq x }}
\frac{n_1}{\varphi([n_1,\ldots,n_r])n_2\cdots n_r}=\bo{x}\,. \]
\end{lem}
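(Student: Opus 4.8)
The plan is to sum out $n_r,n_{r-1},\dots,n_2$ one variable at a time, reducing the multiple sum to the classical one‑dimensional estimate $\sum_{n\leq x}(n/\varphi(n))^r=O_r(x)$; the only tools are the identities $[a,b]=ab/(a,b)$ and $\varphi([a,b])=\varphi(a)\varphi(b)/\varphi((a,b))$, plus an auxiliary bound controlling the cost of removing one variable.

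That auxiliary bound, which I would prove first, is: for every fixed integer $k\geq 1$ and every positive integer $q$,
\[
\sum_{m\geq 1}\frac{\varphi((q,m))^{k}\,m^{\,k-2}}{(q,m)^{k-1}\,\varphi(m)^{k}}\ \leq\ A_k\,\frac{q}{\varphi(q)},
\]
with $A_k$ depending only on $k$. The general term is multiplicative in $m$, so the left‑hand side equals $\prod_p\sigma_p(v_p(q))$ with $\sigma_p(v)=\sum_{a\geq 0}\varphi(p^{\min(v,a)})^{k}p^{(k-2)a}\big/\bigl(p^{(k-1)\min(v,a)}\varphi(p^a)^{k}\bigr)$. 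Summing these geometric‑type series explicitly gives $\sigma_p(0)=1+\frac{p^{k}}{(p-1)^{k}(p^{2}-1)}=1+O_k(p^{-2})$ and, for every $v\geq 1$, $\sigma_p(v)=\frac{p}{p-1}\bigl(1-\frac{1}{p^{v}(p+1)}\bigr)\leq\frac{p}{p-1}$ — notably, for $v\geq 1$ this value is independent of $k$. Hence $A_k:=\prod_p\sigma_p(0)$ converges, and since $\sigma_p(0)\geq 1$,
\[
\prod_p\sigma_p(v_p(q))=A_k\prod_{p\mid q}\frac{\sigma_p(v_p(q))}{\sigma_p(0)}\leq A_k\prod_{p\mid q}\frac{p}{p-1}=A_k\,\frac{q}{\varphi(q)}.
\]

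Next I would introduce, for $1\leq j\leq r$,
\[
T_j:=\sum_{\substack{n_1\leq x\\ n_2,\dots,n_j\geq 1}}\frac{n_1\,[n_1,\dots,n_j]^{\,r-j}}{n_2\cdots n_j\;\varphi([n_1,\dots,n_j])^{\,r-j+1}},
\]
so that $T_r$ is exactly the sum in the statement while $T_1=\sum_{n_1\leq x}(n_1/\varphi(n_1))^{r}$. For $2\leq j\leq r$, summing $T_j$ over $n_j$ with $n_1,\dots,n_{j-1}$ fixed, and writing $L=[n_1,\dots,n_{j-1}]$, $g=(L,n_j)$, one has $[n_1,\dots,n_j]=Ln_j/g$ and $\varphi([n_1,\dots,n_j])=\varphi(L)\varphi(n_j)/\varphi(g)$, so with $k=r-j+1$,
\[
\sum_{n_j\geq 1}\frac{[n_1,\dots,n_j]^{\,k-1}}{\varphi([n_1,\dots,n_j])^{\,k}\,n_j}=\frac{L^{\,k-1}}{\varphi(L)^{\,k}}\sum_{m\geq 1}\frac{\varphi((L,m))^{k}\,m^{\,k-2}}{(L,m)^{k-1}\varphi(m)^{k}}\leq A_k\,\frac{L^{\,k}}{\varphi(L)^{\,k+1}}
\]
by the auxiliary bound with $q=L$. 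As $k=r-(j-1)$, this says precisely $T_j\leq A_k\,T_{j-1}$; iterating from $T_r$ down to $T_1$ gives $T_r\leq\bigl(\prod_{k=1}^{r-1}A_k\bigr)T_1$. Finally $T_1=\sum_{n\leq x}(n/\varphi(n))^r=O_r(x)$ is classical — writing $(n/\varphi(n))^r=\sum_{d\mid n}g(d)$ with $g$ multiplicative, supported on squarefree integers and $g(p)=(p/(p-1))^r-1=O_r(1/p)$, one has $T_1\leq x\sum_{d\geq 1}g(d)/d=x\prod_p(1+g(p)/p)=O_r(x)$.

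The step I expect to be the main obstacle is the auxiliary bound, specifically evaluating the local factors $\sigma_p(v)$ and observing that $\sigma_p(v)\leq p/(p-1)$ holds uniformly in $v\geq 1$ and in $k$, while $\sigma_p(0)=1+O_k(p^{-2})$. This is exactly what makes each one‑variable reduction cost only a bounded multiplicative constant and convert a kernel of shape $L^{k-1}/\varphi(L)^{k}$ into one of shape $L^{k}/\varphi(L)^{k+1}$, so the telescoping lands cleanly on $(n_1/\varphi(n_1))^r$; the remainder is bookkeeping with the two $\varphi$‑ and lcm‑identities.
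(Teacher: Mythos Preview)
Your argument is correct. The auxiliary Euler product computation checks out: for $a\leq v$ the local term collapses to $p^{-a}$, for $a>v$ it becomes $p^{v-2a}$, and summing these two geometric series indeed gives $\sigma_p(v)=\frac{p}{p-1}\bigl(1-\frac{1}{p^{v}(p+1)}\bigr)$, uniformly in $k$. The telescoping $T_j\leq A_{r-j+1}T_{j-1}$ is then exact, and the endpoint $T_1=\sum_{n\leq x}(n/\varphi(n))^r=O_r(x)$ is standard.

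Your route, however, is quite different from the paper's. The paper treats the \emph{bounded} variable $n_1$ first: it rewrites the summand via $[n_1,\dots,n_r](n_1,[n_2,\dots,n_r])=n_1[n_2,\dots,n_r]$ and bounds $\prod_{p\mid[n_1,\dots,n_r]}(1-1/p)^{-1}\leq\prod_i n_i/\varphi(n_i)$, reducing the inner sum over $n_1\leq x$ to the elementary estimate $\sum_{n\leq x}(n,z)\,n/\varphi(n)=O(xz^{\varepsilon})$ (their Lemma~\ref{phi-lem1}) with $z=[n_2,\dots,n_r]$; the remaining unbounded variables are then handled crudely via $[n_2,\dots,n_r]\geq(n_2\cdots n_r)^{1/r}$ and $n_i/\varphi(n_i)=O(n_i^{\varepsilon})$. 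You do the opposite: you sum out the \emph{unbounded} variables $n_r,\dots,n_2$ one at a time with a sharp Euler-product identity, landing on the one-dimensional $\sum_{n\leq x}(n/\varphi(n))^r$. Your approach yields an explicit constant $\prod_{k=1}^{r-1}A_k$ and avoids the $\varepsilon$-losses and the somewhat wasteful inequality $[n_2,\dots,n_r]\geq(n_2\cdots n_r)^{1/r}$; the paper's approach is shorter and needs no local-factor bookkeeping, at the price of cruder intermediate bounds.
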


\begin{proof}
We may assume $r\geq2$, the case $r=1$ being just \eqref{phi1}.
We will make use of the formula $\varphi(n)=n\prod_{p\mid n}(1-1/p)$, where $p$ denotes a prime number. The main term of the considered series can thus be written as
\[ \frac{n_1}{[n_1,\ldots,n_r]n_2\cdots n_r}\prod_{p\mid[n_1,\ldots,n_r]}\left( 1-\frac{1}{p} \right)^{-1}\,. \]
Then, in view of the identity $[n_1,\ldots,n_r]\cdot(n_1,[n_2,\ldots,n_r])=n_1[n_2,\ldots,n_r]$, we can bound our series from above by
\begin{align*}
&\sum_{\substack{N\\ n_1\leq x }}
\frac{(n_1,[n_2,\ldots,n_r])}{[n_2,\ldots,n_r]n_2\cdots n_r}
\prod_{i=1}^r\  \prod_{p\mid n_i}\left( 1-\frac{1}{p} \right)^{-1} \\
= &  \sum_{n_2,\ldots,n_r\geq1}
\frac{1}{[n_2,\ldots,n_r]n_2\cdots n_r}
\prod_{i=2}^r\frac{n_i}{\varphi(n_i)}\cdot\sum_{n_1\leq x}(n_1,[n_2,\ldots,n_r])\frac{n_1}{\varphi(n_1)}\,.
\end{align*}
Taking $n=n_1$, $z=[n_2,\ldots,n_r]$ and $\varepsilon=1/2$, Lemma \ref{phi-lem1} says that the inner sum  is estimated by $\bo{x[n_2,\ldots,n_r]^{1/2}}$. Applying the obvious inequalities
\[ [n_2,\ldots,n_r]\geq(n_2\cdots n_r)^{1/{(r-1)}}\geq(n_2\cdots n_r)^{1/r}\,, \]
we can then estimate the series by
\begin{align*}
&\ \bo{x\sum_{n_2,\ldots,n_r\geq1}\frac{1}{(n_2\cdots n_r)^{1+1/2r}}\cdot\prod_{i=2}^r\frac{n_i}{\varphi(n_i)}} \\
=& \ \bo{x\sum_{n_2,\ldots,n_r\geq1}\frac{1}{(n_2\cdots n_r)^{1+1/4r}}}\\
=& \ \bo{x\left(\zeta\Big(1+\frac{1}{4r}\Big)\right)^{r-1}}=\bo{x}\,,
\end{align*}
where in the first equality we used the estimates $\frac{n_i}{\varphi(n_i)}=O(n_i^{1/4r})$ in view of \eqref{phi2}, and for the last equality we used the fact that the Riemann zeta function $\zeta$ is convergent at $(1+1/4r)$.
\end{proof}

We are now ready to prove Theorem \ref{pollack}. 

\begin{proof}[Proof of Theorem \ref{pollack}]
We may assume $r\geq2$. We decompose the series considered in \eqref{pollack-eq} into the sums over $n_1$ lying in dyadic intervals, i.e.\ we express it as
\begin{equation}\label{eq2-pollack}
\sum_{j\geq0}\ \sum_{\substack{N \\ 2^jx<n_1\leq2^{j+1}x}}
	\frac{1}{\varphi([n_1,\ldots,n_r])n_1\cdots n_r}\,.
\end{equation}
We now estimate each inner series on the multi-indices $N$ in \eqref{eq2-pollack}. For $j\geq0$, each of them equals
\begin{align*}
&\sum_{2^jx<n_1\leq2^{j+1}x}\frac{1}{n_1^2} \sum_{n_2,\ldots,n_r\geq1}
	\frac{n_1}{\varphi([n_1,\ldots,n_r])n_2\cdots n_r} \\
&\leq  \frac{1}{(2^jx)^2}\cdot\sum_{\substack{N \\ n_1\leq 2^{j+1}x}}
	\frac{n_1}{\varphi([n_1,\ldots,n_r])n_2\cdots n_r} \\
	&= \bo{\frac{1}{(2^jx)^2}\cdot 2^{j+1}x}=\bo{\frac{1}{2^jx}}\,,
\end{align*}
where the estimate is due to Lemma \ref{phi-lem2}. Finally, we conclude by summing the obtained error terms over $j$, so that \eqref{eq2-pollack} equals $\bo{1/x}$.
\end{proof}

The following result is an immediate consequence of Theorem \ref{pollack}.
\begin{cor}
Let $x_1,\ldots,x_r\geq1$. Then we have
\[ \sum_{\substack{N \\ n_i>x_i}}
	\frac{1}{\varphi([n_1,n_2,\ldots,n_r])n_1n_2\cdots n_r}=\bo{\frac{1}{\max_{i}(x_i)}}\,. \]
\end{cor}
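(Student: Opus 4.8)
The plan is to reduce the general statement to the one-variable tail bound of Theorem~\ref{pollack} by a symmetry-and-splitting argument. First I would observe that the summand
\[
\frac{1}{\varphi([n_1,\ldots,n_r])\,n_1\cdots n_r}
\]
is symmetric in $n_1,\ldots,n_r$, and that the region $\{n_i>x_i\ \forall i\}$ is contained in $\{n_{i_0}>\max_i(x_i)\}$ for any index $i_0$ achieving the maximum. So, writing $M=\max_i(x_i)$ and choosing $i_0$ with $x_{i_0}=M$, I would bound the sum over $\{n_i>x_i\ \forall i\}$ from above by the sum over the single constraint $\{n_{i_0}>M\}$ (dropping the other lower bounds only makes the sum larger, since all terms are positive). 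After relabelling the indices so that $i_0$ becomes the first index — permissible by symmetry — this is exactly
\[
\sum_{\substack{N\\ n_1>M}}\frac{1}{\varphi([n_1,\ldots,n_r])\,n_1\cdots n_r},
\]
which Theorem~\ref{pollack} estimates as $\bo{1/M}=\bo{1/\max_i(x_i)}$.

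The only point needing a word of care is that Theorem~\ref{pollack} is stated with the continuous tail parameter $x$ rather than an integer, but its proof (via the dyadic decomposition and Lemma~\ref{phi-lem2}) works verbatim for any real $x\geq 1$, and $M\geq 1$ by hypothesis; alternatively one may replace $M$ by $\lfloor M\rfloor\geq M/2$ at the cost of an absorbed constant. This is essentially the entire argument.

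I do not anticipate a genuine obstacle here: the corollary is a formal consequence of Theorem~\ref{pollack} together with positivity of the terms and the symmetry of the summand. The only thing to be slightly careful about in the write-up is to state clearly that one discards the redundant constraints $n_i>x_i$ for $i\neq i_0$ and keeps only the binding one, so that the inequality goes in the right direction.
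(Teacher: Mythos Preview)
Your proposal is correct and is essentially the same argument as the paper's: the paper simply says ``up to swapping the variables, we may suppose that $x_1=\max_i(x_i)$ and apply Theorem~\ref{pollack},'' which is precisely your symmetry-plus-positivity reduction spelled out in more detail. Your remark about real versus integer tail parameter is a harmless aside, since Theorem~\ref{pollack} is already stated and proved for arbitrary real $x\geq 1$.
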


\begin{proof}
Up to swapping the variables, we may suppose that $x_1=\max_i(x_i)$ and apply Theorem \ref{pollack}.
\end{proof}

\section{Kummer theory for number fields}\label{sec-kummer}
Let $K$ be a number field, and let $\alpha_1,\ldots, \alpha_r$ be algebraic numbers which  generate a  multiplicative subgroup $G$ of $K^\times$ of positive rank $r$. Notice that $G$ is torsion-free. In this section we prove some results about cyclotomic-Kummer extensions of $K$ of the type $K(\zeta_{n},\alpha_1^{1/t_1},\ldots, \alpha_r^{1/t_r})$ with $t_i\mid n$ for all $i$.

\subsection{Bounded failure of maximality for Kummer degrees}
In \cite[Theorem 3.1]{PS1} Perucca and the author showed, with a direct proof, that the failure of maximality of Kummer degrees of the type $[K(\zeta_m,G^{1/n}):K(\zeta_m)]$, with $n\mid m$, is bounded in a strong way. The following result is a further generalization and a consequence of this fact.

\begin{prop}\label{prop-PS1}
There exists an integer $B\geq1$, which depends only on $K$ and the $\alpha_i$'s,  such that for all positive integers $n,t_1,\ldots, t_r$, where $n$ is a common multiple of the $t_i$'s, we have 
\begin{equation}\label{bound}
\frac{\prod_{i=1}^rt_i}{\big[K\big(\zeta_{n},\alpha_1^{1/t_1},\ldots, \alpha_r^{1/t_r}\big):K(\zeta_n)\big]}\mid B\,.
\end{equation}
\end{prop}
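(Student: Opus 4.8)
The strategy is to reduce the general statement to the case of a single cyclic Kummer extension over a fixed cyclotomic field, which is controlled by \cite[Theorem 3.1]{PS1}, and then to combine the $r$ layers multiplicatively. First I would fix notation: write $L = K(\zeta_n)$ and consider the tower $L \subseteq L(\alpha_1^{1/t_1}) \subseteq L(\alpha_1^{1/t_1},\alpha_2^{1/t_2}) \subseteq \cdots \subseteq L(\alpha_1^{1/t_1},\ldots,\alpha_r^{1/t_r}) = K_{N,T}$-type field (here with $N$ trivial). The total degree $[K(\zeta_n,\alpha_1^{1/t_1},\ldots,\alpha_r^{1/t_r}):K(\zeta_n)]$ factors as the product of the degrees $[L(\alpha_1^{1/t_1},\ldots,\alpha_i^{1/t_i}):L(\alpha_1^{1/t_1},\ldots,\alpha_{i-1}^{1/t_{i-1}})]$ of the successive steps, and each such step is a cyclic Kummer extension of degree dividing $t_i$ (since $\zeta_{t_i} \in L$ because $t_i \mid n$). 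So the ratio in \eqref{bound} equals the product over $i$ of $t_i$ divided by the degree of the $i$-th step, and it suffices to bound each factor.

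The key input for bounding a single factor is \cite[Theorem 3.1]{PS1}, which asserts that the failure of maximality of $[K(\zeta_m, G^{1/n}):K(\zeta_m)]$ (for $n\mid m$, $G$ the full group) is bounded by a constant depending only on $K$ and the $\alpha_i$'s. I would like to apply this with $G$ replaced by the group generated by a single $\alpha_i$, or rather use the ambient result for $G$ to deduce the single-generator statement. Concretely: the index of maximal Kummer failure for $\langle\alpha_i\rangle$ is controlled because $\langle\alpha_i\rangle \subseteq G$, and a divisor-of-degree argument shows that if the big Kummer extension $K(\zeta_m, G^{1/m})$ has near-maximal degree over $K(\zeta_m)$, then so does each $K(\zeta_m, \alpha_i^{1/t_i})$. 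The point is that the ``entanglement'' of $\alpha_i^{1/t_i}$ with the cyclotomic field and with the other radicals is bounded: there is a fixed integer $B_0$ (from \cite[Theorem 3.1]{PS1}) such that $B_0 \cdot [K(\zeta_m,G^{1/m}):K(\zeta_m)]$ is always the maximal possible value $\prod_j t_j'$ for the appropriate exponents. One then extracts from this that, layer by layer, each step degree $[L(\alpha_1^{1/t_1},\ldots,\alpha_i^{1/t_i}):L(\alpha_1^{1/t_1},\ldots,\alpha_{i-1}^{1/t_{i-1}})]$ differs from $t_i$ by a factor dividing $B_0$. Multiplying the $r$ bounds gives that the ratio in \eqref{bound} divides $B := B_0^r$, which depends only on $K$ and the $\alpha_i$'s (note $r$ is fixed), and in particular not on $n$ or the $t_i$'s.

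The main obstacle is the bookkeeping needed to pass from the global statement about $[K(\zeta_m,G^{1/m}):K(\zeta_m)]$ in \cite[Theorem 3.1]{PS1} to the layered statement about the individual exponents $t_i$, which may be distinct and need not divide one another. The clean way to handle this is: set $m = [n, t_1, \ldots, t_r] = n$ (since $n$ is a common multiple), and compare $K(\zeta_n, \alpha_1^{1/t_1},\ldots,\alpha_r^{1/t_r})$ with the ``fully maximal'' field $K(\zeta_n, \alpha_1^{1/n},\ldots,\alpha_r^{1/n})$. The degree $[K(\zeta_n,\alpha_1^{1/n},\ldots,\alpha_r^{1/n}):K(\zeta_n)]$ is, up to the bounded factor $B_0$, equal to $n^r$; and inside this extension the subextension $K(\zeta_n,\alpha_1^{1/t_1},\ldots,\alpha_r^{1/t_r})$ has index over $K(\zeta_n)$ that is, up to a factor dividing $B_0$, equal to $\prod_i t_i$ — because cutting down the exponent from $n$ to $t_i$ in the $i$-th radical multiplies the degree by exactly $n/t_i$ when Kummer theory is ``on the nose,'' and the discrepancy is absorbed by $B_0$. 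Carefully writing this out, using that $\Gal(K(\zeta_n, G^{1/n})/K(\zeta_n))$ embeds in $(\Z/n\Z)^r$ with cokernel of bounded size, yields the divisibility \eqref{bound} with $B = B_0$ (or a small power thereof). I would take care to state the Kummer-pairing/cokernel argument precisely since that is where the uniformity in $n$ and $T$ is actually used.
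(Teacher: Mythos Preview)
Your second paragraph's comparison argument is exactly the paper's proof: compare $K(\zeta_n,\alpha_1^{1/t_1},\ldots,\alpha_r^{1/t_r})$ with the all-equal-exponent field (the paper takes exponent $t=[t_1,\ldots,t_r]$ rather than $n$, an immaterial difference) and observe that the ratio in \eqref{bound} divides $t^r/[K(\zeta_n,\alpha_1^{1/t},\ldots,\alpha_r^{1/t}):K(\zeta_n)]$, which divides the constant $B$ of \cite[Theorem~3.1]{PS1}. The tower argument of your first paragraph is a detour that, as you yourself note, cannot bound the individual step failures without invoking the second paragraph's comparison (the base field of each step varies with the $t_j$'s, so a uniform single-generator bound over $K$ does not directly apply).
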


\begin{proof}
Let $n,t_1,\ldots, t_r$ be arbitrary with $t:=[t_1,\ldots, t_r]\mid n$. Then by \cite[Theorem 3.1]{PS1} there is $B\geq1$ (depending only on $K$ and the $\alpha_i$'s) such that 
\begin{equation}\label{bound_PS1}
\frac{t^r}{\big[K\big(\zeta_{n},\alpha_1^{1/t},\ldots, \alpha_r^{1/t}\big):K(\zeta_n)\big]}\mid B\,.
\end{equation}
We show that this bound $B$ satisfies also \eqref{bound}. We have
\[ K\big(\zeta_{n},\alpha_1^{1/t_1},\ldots, \alpha_r^{1/t_r}\big)
\subseteq K\big(\zeta_{n},\alpha_1^{1/t},\ldots, \alpha_r^{1/t}\big) \]
and the degree of this extension divides $t^r/\prod_it_i$ as $\alpha_i^{1/t}=(\alpha_i^{1/t_i})^{t
_i/t}$ (up to a $t$-th root of unity) for every $i$. We deduce that the ratio in \eqref{bound} is a divisor of the ratio in \eqref{bound_PS1}, and hence it divides $B$.
\end{proof}

Notice that the bound $B$ considered in the proof is not optimal in general, but it is suitable for our purposes.

\begin{cor}\label{cor-bound}
For all positive integers $n,t_1,\ldots, t_r$, where $n$ is a common multiple of the $t_i$'s, we have 
\[
\big[K\big(\zeta_{n},\alpha_1^{1/t_1},\ldots, \alpha_r^{1/t_r}\big):K\big]
\geq \frac{\varphi(n)\prod_it_i}{[K:\Q]B}\,,
\]
where $B\geq1$ is the integer from Proposition \ref{prop-PS1} associated to $K$ and the $\alpha_i$'s.
\end{cor}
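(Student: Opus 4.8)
The plan is to deduce Corollary~\ref{cor-bound} directly from Proposition~\ref{prop-PS1} together with the standard facts about cyclotomic degrees. First I would write
\[
\big[K\big(\zeta_{n},\alpha_1^{1/t_1},\ldots, \alpha_r^{1/t_r}\big):K\big]
= \big[K\big(\zeta_{n},\alpha_1^{1/t_1},\ldots, \alpha_r^{1/t_r}\big):K(\zeta_n)\big]\cdot[K(\zeta_n):K]\,,
\]
so that the problem splits into bounding each of the two factors from below.

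For the Kummer factor, Proposition~\ref{prop-PS1} says that $\prod_i t_i$ divided by this degree is a divisor of $B$, hence in particular is at most $B$; equivalently
\[
\big[K\big(\zeta_{n},\alpha_1^{1/t_1},\ldots, \alpha_r^{1/t_r}\big):K(\zeta_n)\big]\geq \frac{\prod_i t_i}{B}\,.
\]
For the cyclotomic factor, I would use $[\Q(\zeta_n):\Q]=\varphi(n)$ and the elementary tower/multiplicativity inequality $[K(\zeta_n):K]\geq [\Q(\zeta_n):\Q]/[K:\Q]=\varphi(n)/[K:\Q]$, which follows from $[K(\zeta_n):\Q]=[K(\zeta_n):\Q(\zeta_n)]\cdot[\Q(\zeta_n):\Q]$ and $[K(\zeta_n):\Q(\zeta_n)]\leq[K:\Q]$ (since $K(\zeta_n)=\Q(\zeta_n)\cdot K$). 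Multiplying the two lower bounds gives exactly
\[
\big[K\big(\zeta_{n},\alpha_1^{1/t_1},\ldots, \alpha_r^{1/t_r}\big):K\big]
\geq \frac{\varphi(n)\prod_i t_i}{[K:\Q]B}\,,
\]
as claimed.

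There is essentially no obstacle here: the only mild point to state carefully is the cyclotomic inequality $[K(\zeta_n):K]\geq\varphi(n)/[K:\Q]$, but this is entirely routine. The hypothesis that $n$ is a common multiple of the $t_i$'s is exactly what is needed to invoke Proposition~\ref{prop-PS1}, and no extra case analysis is required.
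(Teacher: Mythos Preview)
Your proof is correct and follows essentially the same approach as the paper: factor the degree via the tower $K\subseteq K(\zeta_n)\subseteq K(\zeta_n,\alpha_1^{1/t_1},\ldots,\alpha_r^{1/t_r})$, bound the Kummer part below by $\prod_i t_i/B$ using Proposition~\ref{prop-PS1}, and bound the cyclotomic part below by $\varphi(n)/[K:\Q]$. The paper's proof is the same two-line argument, just stated more tersely.
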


\begin{proof}
By \eqref{bound} the degree of the considered Kummer extension over $K(\zeta_n)$ is at least $\prod_it_i/B$, whereas it is easy to see that $[K(\zeta_n):K]\geq \varphi(n)/[K:\Q]$.
\end{proof}

Recall the notation for the fields $K_{N,T}$ and $K_{w,N,T}$ from Section \ref{notation} and Theorem \ref{main}, where $N,T$ are $r$-tuples of positive integers and $w=w(T):=[d_1t_1,\ldots,d_rt_r]$ for some positive integers $d_1,\ldots,d_r$. 
\begin{cor}\label{cor-convergence1}
Fix an $r$-tuple $T$. The series $\sum_{N}\frac{1}{[K_{N,T}:K]}$ converges.
\end{cor}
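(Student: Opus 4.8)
\textbf{Proof proposal for Corollary \ref{cor-convergence1}.}

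The plan is to bound the general term $\frac{1}{[K_{N,T}:K]}$ from above by an explicit expression to which Theorem \ref{pollack} (or rather its Corollary) applies. Fix the $r$-tuple $T=(t_1,\ldots,t_r)$. For each multi-index $N=(n_1,\ldots,n_r)$, set $m=m(N,T):=[n_1t_1,\ldots,n_rt_r]$, so that $K_{N,T}=K(\zeta_m,\alpha_1^{1/n_1t_1},\ldots,\alpha_r^{1/n_rt_r})$ with $n_it_i\mid m$ for every $i$. Applying Corollary \ref{cor-bound} with $n=m$ and exponents $n_it_i$, we get
\[
[K_{N,T}:K]\geq\frac{\varphi(m)\prod_i(n_it_i)}{[K:\Q]B}\,,
\]
where $B$ is the constant from Proposition \ref{prop-PS1}. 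Since $\prod_i t_i$ is a fixed positive constant and $\varphi(m)\geq\varphi([n_1,\ldots,n_r])$ (because $[n_1,\ldots,n_r]\mid m$ and $\varphi$ is nondecreasing along divisibility), we obtain
\[
\frac{1}{[K_{N,T}:K]}\leq\frac{[K:\Q]B}{\prod_i t_i}\cdot\frac{1}{\varphi([n_1,\ldots,n_r])\,n_1\cdots n_r}\,.
\]

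With this bound in hand, the tail of the series over $N$ is controlled directly: summing the right-hand side over all $N$ with $\max_i n_i>x$ is, up to the fixed constant $\frac{[K:\Q]B}{\prod_i t_i}$, exactly the quantity estimated in the Corollary to Theorem \ref{pollack}, hence $O(1/x)$. In particular the partial sums of $\sum_N\frac{1}{[K_{N,T}:K]}$ form a Cauchy sequence (the terms are nonnegative and the tails go to zero), so the series converges; one could equally invoke Theorem \ref{pollack} itself with the observation that $\sum_{N,\,n_1>0}\frac{1}{\varphi([n_1,\ldots,n_r])n_1\cdots n_r}$ is finite, being the $x\to 0^+$ (or $x=1$ minus a finite part) value of a convergent-tailed series, or simply note that Lemma \ref{phi-lem2} combined with a dyadic decomposition as in the proof of Theorem \ref{pollack} gives finiteness of the full sum.

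I do not anticipate a genuine obstacle here: the only point requiring a little care is making sure the exponents in the Kummer extension are $n_it_i$ (not $n_i$), so that Corollary \ref{cor-bound} is applied with the correct data, and that the passage from $\varphi(m)$ to $\varphi([n_1,\ldots,n_r])$ is justified by $[n_1,\ldots,n_r]\mid[n_1t_1,\ldots,n_rt_r]$. Everything else is a direct citation of the estimates already established in Section \ref{sec-euler} and Section \ref{sec-kummer}.
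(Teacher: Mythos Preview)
Your proposal is correct and follows essentially the same route as the paper: apply Corollary \ref{cor-bound} with exponents $n_it_i$ and cyclotomic level $m=[n_1t_1,\ldots,n_rt_r]$, then use $[n_1,\ldots,n_r]\mid m$ to pass to $\varphi([n_1,\ldots,n_r])$ and invoke Theorem \ref{pollack}. The only cosmetic difference is that the paper simply drops the factor $\prod_i t_i\geq1$ in the resulting upper bound, whereas you keep it; and the paper cites Theorem \ref{pollack} directly for convergence rather than spelling out the tail argument.
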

In particular, the series of this type, which we will consider in the later sections, converge absolutely and $\sum_N=\sum_{n_1\geq1}\cdots\sum_{n_r\geq1}$ can be interpreted as the series over the multi-indices $N$.

\begin{proof}
By Corollary \ref{cor-bound} we can bound
\[ \frac{1}{[K_{N,T}:K]}\leq\frac{[K:\Q]B}{\varphi([n_1,\ldots,n_r])n_1\cdots n_r}\,. \]
Then the convergence follows by Theorem \ref{pollack}.
\end{proof}

\begin{cor}\label{cor-convergence2}
The series $\sum_T\sum_{N}\frac{1}{[K_{w,N,T}:K]}$ converges.
\end{cor}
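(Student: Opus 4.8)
The plan is to bound each degree $[K_{w,N,T}:K]$ from below by a quantity depending only on the products $n_it_i$ (and not on the $d_i$'s), and then to reduce the resulting double series to a single series over $r$-tuples that is controlled by the elementary estimates of Section~\ref{sec-euler}.

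First I would apply Corollary~\ref{cor-bound} with $n:=W=[d_1t_1,\ldots,d_rt_r,n_1t_1,\ldots,n_rt_r]$ and Kummer exponents $n_1t_1,\ldots,n_rt_r$ (each $n_it_i$ divides $W$, so $W$ is an admissible common multiple); since $K(\zeta_W,\alpha_1^{1/n_1t_1},\ldots,\alpha_r^{1/n_rt_r})=K_{w,N,T}$ by definition, this gives
\[
[K_{w,N,T}:K]\ \geq\ \frac{\varphi(W)\prod_i n_it_i}{[K:\Q]B}\,.
\]
Because $[n_1t_1,\ldots,n_rt_r]\mid W$ we have $\varphi([n_1t_1,\ldots,n_rt_r])\mid\varphi(W)$, hence $\varphi(W)\geq\varphi([n_1t_1,\ldots,n_rt_r])$, and therefore, all terms being positive,
\[
\sum_T\sum_N\frac{1}{[K_{w,N,T}:K]}\ \leq\ [K:\Q]\,B\sum_T\sum_N\frac{1}{\varphi([n_1t_1,\ldots,n_rt_r])\prod_i n_it_i}\,.
\]

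Next I would re-index the double sum. Writing $m_i=n_it_i$, for each fixed $r$-tuple $M=(m_1,\ldots,m_r)$ the number of pairs $(N,T)$ with $n_it_i=m_i$ for all $i$ is exactly $\prod_i\tau(m_i)$ (choose each $t_i$ to be a divisor of $m_i$ and set $n_i=m_i/t_i$). Rearranging the positive series accordingly, the right-hand side above becomes
\[
[K:\Q]\,B\sum_M\frac{\prod_i\tau(m_i)}{\varphi([m_1,\ldots,m_r])\prod_i m_i}\,,
\]
and it remains to show this single series converges. Fixing $\varepsilon>0$, I would use \eqref{tau} to bound $\tau(m_i)=\bo{m_i^\varepsilon}$, use \eqref{phi2} in the form $\varphi(n)\geq c_\varepsilon\, n^{1-\varepsilon}$, and use $[m_1,\ldots,m_r]\geq\max_i m_i\geq(m_1\cdots m_r)^{1/r}$; this makes each term
\[
\bo{(m_1\cdots m_r)^{\,\varepsilon-1-(1-\varepsilon)/r}}\,,
\]
so that choosing any $\varepsilon<\tfrac{1}{r+1}$ (for instance $\varepsilon=\tfrac{1}{2(r+1)}$) forces the exponent to be strictly less than $-1$, whence $\sum_M(m_1\cdots m_r)^{\varepsilon-1-(1-\varepsilon)/r}=\big(\sum_{m\geq1}m^{\varepsilon-1-(1-\varepsilon)/r}\big)^{r}<\infty$. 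This finishes the argument.

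The step that requires care is the reduction itself rather than any individual estimate. The naive attempt to split $\sum_T\sum_N$ as a product of a sum over $N$ and a sum over $T$ fails: bounding $\prod_i n_it_i\geq(\prod_i n_i)(\prod_i t_i)$ leaves the divergent factor $\sum_T\prod_i t_i^{-1}=\zeta(1)^r$. It is essential to keep the products $n_it_i$ coupled and pass to the variables $m_i=n_it_i$, absorbing the divisor-counting factor $\prod_i\tau(m_i)$, which is harmless by \eqref{tau}. After that, only the bookkeeping of exponents in the final elementary bound is needed.
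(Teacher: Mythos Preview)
Your argument is correct. After the same initial appeal to Corollary~\ref{cor-bound}, the paper proceeds differently: it relaxes the lcm in the denominator from $[n_1t_1,\ldots,n_rt_r]$ to the $2r$-term lcm $[n_1,t_1,\ldots,n_r,t_r]$ (which divides it), obtaining
\[
\frac{1}{[K_{w,N,T}:K]}\leq\frac{1}{[K_{N,T}:K]}\leq\frac{[K:\Q]B}{\varphi([n_1,t_1,\ldots,n_r,t_r])\,n_1t_1\cdots n_rt_r}\,,
\]
and then reads the double sum $\sum_T\sum_N$ as a single sum over the $2r$-tuple $(n_1,t_1,\ldots,n_r,t_r)$, so that convergence is a direct instance of Theorem~\ref{pollack} in $2r$ variables. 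You instead keep the $r$-variable lcm, substitute $m_i=n_it_i$, and pay the divisor-count factor $\prod_i\tau(m_i)$, which is harmless by \eqref{tau}. Your route is more elementary---it never invokes Theorem~\ref{pollack}, only \eqref{tau}, \eqref{phi2}, and the crude bound $[m_1,\ldots,m_r]\geq(\prod_i m_i)^{1/r}$---while the paper's version is a one-line reuse of machinery already established. Both are fine; yours is self-contained, the paper's is shorter.
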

In particular, the series of this type, which we will consider in the later sections, converge absolutely and $\sum_T\sum_N=\sum_{t_1\geq1}\cdots\sum_{t_r\geq1}\sum_{n_1\geq1}\cdots\sum_{n_r\geq1}$ can be interpreted as the series over the multi-indices $T$ and $N$.
 
\begin{proof}
Since the degree $[K_{N,T}:K]$ divides $[K_{w,N,T}:K]$ and $[n_1t_1,\ldots,n_rt_r]$ is divisible by $[n_1,t_1,\ldots,n_r,t_r]$, applying Corollary \ref{cor-bound} we can bound 
\[ \frac{1}{[K_{w,N,T}:K]}\leq \frac{1}{[K_{N,T}:K]}\leq \frac{[K:\Q]B}{\varphi([n_1,t_1,\ldots,n_r,t_r])n_1t_1\cdots n_rt_r}\,. \]
The convergence follows again by Theorem \ref{pollack}.
\end{proof}

\subsection{Estimates for the discriminant}

We now prove an estimate for the discriminant of a cyclotomic-Kummer extension of the type $K(\zeta_{n},\alpha_1^{1/t_1},\ldots, \alpha_r^{1/t_r})$. In fact, we give a variant of \cite[Theorem 4.2]{PS1}.

We write $\OO_K$ for the ring of integers of $K$.
If $L/K$ is a finite extension of number fields, we denote by $\N_{L/K}$ the relative norm for fractional ideals of $L$, by $d_{L/K}$ the relative discriminant, and by $d_K$ the absolute discriminant of $K$. 
We will make use of the following relation for the relative discriminants of a tower of number fields $K''/K'/K$ (see for instance \cite[Ch.\ III, Corollary 2.10]{neukirch}):  
\begin{equation}\label{chaindisc}
d_{K''/K}=\N_{K'/K}(d_{K''/K'})\cdot d_{K'/K}^{[K'':K']}\,.
\end{equation}

\begin{prop}\label{prop-discr} 
Let $K$ be a number field, and let $\gamma_1,\ldots,\gamma_r\in K^\times$ be algebraic numbers which are not roots of unity. Let $t_1,\ldots,t_r$ be positive integers and let $n$ be a common multiple of the $t_i$'s. Setting $F:=K(\zeta_n,\gamma_1^{1/t_1},\ldots,\gamma_r^{1/t_r})$, we have 
\[ \frac{\log\av{d_F}}{\varphi(n)\prod_{i}t_i}\leq [K:\Q]\cdot\log\Big(n\prod_{i}t_i\Big)+ O(1)\,.\]
For $1\leq i\leq r$, write $\gamma_i=\alpha_i/\beta_i$ with $\alpha_i,\beta_i\in\OO_K$. Then the constant implied by the $O$-term can be taken to be 
\[ \log\av{d_K} +2\sum_{i=1}^r\log\av{\N_{K/\Q}(\alpha_i\beta_i)}\,.\]
\end{prop}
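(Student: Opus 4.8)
The plan is to put $F$ at the top of the explicit tower
\[
K=L_0\subseteq L_1:=K(\zeta_n)\subseteq L_2:=L_1\bigl(\gamma_1^{1/t_1}\bigr)\subseteq\cdots\subseteq L_{r+1}:=L_r\bigl(\gamma_r^{1/t_r}\bigr)=F ,
\]
and to control the discriminant along it via the chain formula \eqref{chaindisc}. Iterating \eqref{chaindisc} over $L_0\subseteq\cdots\subseteq L_{r+1}$ and then applying it once more to $\Q\subseteq K\subseteq F$ gives
\[
\log\av{d_F}=[F:K]\log\av{d_K}+\sum_{j=1}^{r+1}[F:L_j]\cdot\log\av{\N_{L_{j-1}/\Q}\bigl(d_{L_j/L_{j-1}}\bigr)} .
\]
Since $[F:K]=[K(\zeta_n):K]\prod_i[L_{i+1}:L_i]$ divides $\varphi(n)\prod_i t_i$ (only the crude inequality $[F:K]\le\varphi(n)\prod_i t_i$ is needed here --- no non-trivial Kummer-degree input is required), dividing the above identity by $\varphi(n)\prod_i t_i$ turns the first summand into a quantity $\le\log\av{d_K}$, which supplies the $\log\av{d_K}$ of the asserted constant. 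It remains to estimate the contribution of each relative discriminant $d_{L_j/L_{j-1}}$.

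The cyclotomic step is straightforward: from $\OO_K[\zeta_n]\subseteq\OO_{K(\zeta_n)}$ one gets $d_{K(\zeta_n)/K}\mid(\operatorname{disc}\Phi_n)\OO_K$ and $\av{\operatorname{disc}\Phi_n}\le n^{\varphi(n)}$, so $\log\av{\N_{K/\Q}(d_{K(\zeta_n)/K})}\le[K:\Q]\varphi(n)\log n$; since $[F:L_1]=\prod_i[L_{i+1}:L_i]\le\prod_i t_i$ and $[F:L_1][L_1:K]=[F:K]$, this step contributes at most $[K:\Q]\log n$ to $\log\av{d_F}/(\varphi(n)\prod_i t_i)$, i.e.\ a piece of the main term. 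For a Kummer step $L_{i+1}/L_i=L_i(\gamma_i^{1/t_i})/L_i$, note that $t_i\mid n$ forces $\zeta_{t_i}\in K(\zeta_n)\subseteq L_i$, so $L_{i+1}/L_i$ ramifies only at primes $\mathfrak{q}$ of $L_i$ dividing $t_i$ or dividing $\alpha_i\beta_i$. At a prime $\mathfrak{q}\mid\alpha_i\beta_i$ with $\mathfrak{q}\nmid t_i$ the ramification is tame, so $v_{\mathfrak{q}}(d_{L_{i+1}/L_i})\le[L_{i+1}:L_i]-1$; summing over such $\mathfrak{q}$ bounds this part of $\log\av{\N_{L_i/\Q}(d_{L_{i+1}/L_i})}$ by $([L_{i+1}:L_i]-1)\log\av{\N_{L_i/\Q}\bigl(\rad((\alpha_i\beta_i)\OO_{L_i})\bigr)}\le([L_{i+1}:L_i]-1)[L_i:K]\log\av{\N_{K/\Q}(\alpha_i\beta_i)}$, which --- using $[F:L_{i+1}]\,[L_i:K]\,[L_{i+1}:L_i]=[F:K]\le\varphi(n)\prod_k t_k$ --- normalises to at most $\log\av{\N_{K/\Q}(\alpha_i\beta_i)}$ per $i$, comfortably inside the allotted $2\sum_i\log\av{\N_{K/\Q}(\alpha_i\beta_i)}$.

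The remaining ingredient, and the main obstacle, is the wild ramification of the Kummer steps at primes dividing $t_i$ (and, analogously, of the cyclotomic step at primes dividing $n$). For a prime $\mathfrak{q}\mid p\mid t_i$ of $L_i$ one can start from the general different estimate $v_{\mathfrak{Q}}(\mathfrak{D}_{L_{i+1}/L_i})\le e_{\mathfrak{Q}/\mathfrak{q}}-1+v_{\mathfrak{Q}}(e_{\mathfrak{Q}/\mathfrak{q}})$ with $e_{\mathfrak{Q}/\mathfrak{q}}\mid t_i$, which yields $v_{\mathfrak{q}}(d_{L_{i+1}/L_i})\le[L_{i+1}:L_i]\bigl(1+v_p(t_i)\,e_{\mathfrak{q}/p}\bigr)$; summing $\log\N\mathfrak{q}$ over $\mathfrak{q}\mid p$ collapses $\sum_{\mathfrak{q}\mid p}e_{\mathfrak{q}/p}f_{\mathfrak{q}/p}$ into $[L_i:\Q]$, and then summing $v_p(t_i)\log p$ over $p\mid t_i$ yields $\log t_i$, producing after normalisation a contribution of the order $[K:\Q]\log t_i$ per $i$ --- hence, together with the previous two paragraphs, an inequality of the asserted shape. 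To get the displayed inequality with the precise main term and constant one must refine this last step, replacing the crude different bound by the sharp per-prime exponents available for cyclotomic and Kummer extensions (this is essentially the computation in the proof of \cite[Theorem 4.2]{PS1}), and then collect the cyclotomic, tame-Kummer, and wild-Kummer contributions; the present statement is the routine adaptation of \cite[Theorem 4.2]{PS1} obtained by carrying the Kummer generators $\gamma_i^{1/t_i}$ in place of $\gamma_i^{1/n}$. The genuinely delicate point throughout is making the wild-ramification estimates sharp enough, since the tame contributions --- which are what produce the explicit $\alpha_i,\beta_i$-dependence --- are harmless.
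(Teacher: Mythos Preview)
Your outline follows a genuinely different route from the paper and, as written, does not establish the proposition with the stated main term and constant. The paper argues via \emph{composita} rather than a tower: it views $F$ as the compositum over $K$ of $K(\zeta_n)$ and the fields $L_i:=K(\gamma_i^{1/t_i})$, invokes the compositum--discriminant divisibility \cite[Lemma~4.1(3)]{PS1} to get
\[
d_{F/K}\ \Big|\ (d_{K(\zeta_n)/K})^{t}\cdot\prod_{i=1}^r(d_{L_i/K})^{\varphi(n)t/t_i},\qquad t:=\prod_i t_i,
\]
and then simply substitutes the ready-made bounds $d_{K(\zeta_n)/K}\mid n^{\varphi(n)}\OO_K$ and $d_{L_i/K}\mid(\alpha_i\beta_i)^{2t_i}t_i^{t_i}\OO_K$, both quoted from the proof of \cite[Theorem~4.2]{PS1}. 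One application of \eqref{chaindisc} for $\Q\subseteq K\subseteq F$ then yields the exact inequality in three lines, with no further ramification analysis.

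By contrast, you explicitly stop short: your general different bound $v_{\mathfrak Q}(\mathfrak D)\le e-1+v_{\mathfrak Q}(e)$ gives, after normalisation, a wild contribution bounded by roughly $2[K:\Q]\log t_i$ per $i$ (since $\sum_{p\mid t_i}(1+v_p(t_i))\log p\le 2\log t_i$), hence a main term of the shape $[K:\Q]\log n+2[K:\Q]\log\prod_i t_i$, which is \emph{not} the asserted $[K:\Q]\log(n\prod_i t_i)$. You then say that the precise coefficient requires ``refining'' via \cite{PS1}; that is an assertion, not a proof. A secondary cost of the tower approach is that each Kummer step $L_{i+1}/L_i$ sits over an ever-growing base $L_i$, so the local bookkeeping must be redone at every stage, whereas the compositum argument keeps the fixed base $K$ and recycles the single-variable Kummer bound once per $i$.

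That said, the applications in this paper only use the weak form $\log\av{d_F}=O\big(\varphi(n)\prod_i t_i\cdot\log(n\prod_i t_i)\big)$, which your crude estimate does deliver; so your sketch would suffice downstream, even though it does not prove Proposition~\ref{prop-discr} as stated.
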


\begin{proof}
Set $t:=\prod_{i=1}^rt_i$, and for $1\leq i\leq r$, write $L_i$ for the extension of $K$ generated by some fixed root $\sqrt[t_i]{\gamma_i}$. We first estimate the relative discriminant $d_{F/K}$. The field $F$ is the compositum of $K(\zeta_{n})$ and the fields $L_i$, so that in view of \cite[Lemma 4.1(3)]{PS1} and the inequalities $[F:K(\zeta_n)]\leq t$ and $[F:L_i]\leq \varphi(n)t/t_i$ for all $i$, we have  
\begin{equation}
d_{F/K}\mid (d_{K(\zeta_n)/K})^{t}\cdot \prod_{i=1}^r (d_{L_i/K})^{\varphi(n)t/t_i} \,.
\label{ld1}\end{equation}
As for the relative discriminants of the extensions $K(\zeta_n)/K$ and $L_i/K$ we have the following estimates:
\[ d_{K(\zeta_n)/K}\mid n^{\varphi(n)}\OO_K \quad \text{ and } \quad
d_{L_i/K}\mid  (\alpha_i\beta_i)^{2t_i}t_i^{t_i}\mathcal O_K\,,  \]
(see the proof of \cite[Theorem 4.2]{PS1}, formulas (4.5) and (4.6), respectively). Combining these two divisibilities with \eqref{ld1} we obtain 
\begin{equation}\label{disc_FK}
d_{F/K} \mid \Bigg(n^{\varphi(n)t}\cdot \prod_{i=1}^r \Big((\alpha_i\beta_i)^{2t_i}t_i^{t_i}\Big)^{\varphi(n)t/t_i}\Bigg)\mathcal O_K
=\left((nt)^{\varphi(n)t}\cdot A^{2\varphi(n)t}\right)\mathcal O_K\,,
\end{equation}
where we set $A:=\prod_{i=1}^r \alpha_i\beta_i$.
In view of the identity \eqref{chaindisc}, we have the following formula for the absolute discriminant of $F$:
\[
\av{d_{F}}=\av{\N_{K/\Q}(d_{F/K})}\av{d_{K}}^{[F:K]}\,,
\]
where $\av{I}$ denotes the nonnegative generator of the $\Z$-ideal $I$. Hence, using \eqref{disc_FK} we can bound $\log\av{d_F}$ from above with the sum  of the following terms 
\[ \begin{array}{rcl}
\log\av{\N_{K/\Q}((nt)^{\varphi(n)t}\OO_K)} & = & \varphi(n)t\cdot [K:\Q] \cdot  \log(nt)\\
 \log\av{\N_{K/\Q}(A^{2\varphi(n)t}\OO_K)} & = & \varphi(n)t\cdot 2\log\av{\N_{K/\Q}(A)}\\
\log  \av{d_{K}}^{[F:K]} & \leqslant & \varphi(n)t \cdot \log\av{d_{K}}\,.\\
\end{array}\] 
We deduce
\[\frac{\log\av{d_{F}}}{\varphi(n)t}\leqslant 
[K:\Q] \log(nt)+\log\av{d_{K}}+2\log\av{\N_{K/\Q}(A)}\,. \qedhere
\]
\end{proof}

\section{The asymptotic formula for the index}\label{sec-index1}
The aim of this section is proving Theorem \ref{thm-index1} with the method of \cite[Section 3]{zieg}. We keep the notation of the introduction and, in particular, of Theorem \ref{thm-index1}. Recall that $N=(n_1,\ldots,n_r)$ and $T=(t_1,\ldots,t_r)$ are multi-indices (whose components are positive integers). 

Notice that throughout Sections \ref{sec-index1}-\ref{sec-order} we may assume $r\geq2$, as the case $r=1$ was proven in \cite{zieg}. Yet all our arguments also work for $r=1$.
Moreover, from now on we say that a prime $\p$ of $K$ is of \emph{degree} $1$ if it has ramification index and residue class degree over $\Q$ equal to $1$. When  necessary, thanks to \cite[Lemma 1]{zieg} we will estimate   the number of primes of $K$ which are not of degree $1$ by the error term $\bo{\sqrt{x}/\log x}$.

\begin{rem}\label{rem-inclexcl}
The defining conditions of the set $\mc R$  (see \eqref{r-set}), namely $\ind_\p(\alpha_i)=t_i$, are equivalent to: $t_i\mid\ind_\p(\alpha_i)$ and $q t_i\nmid\ind_\p(\alpha_i)$ for every prime $q$. With finitely many applications of the inclusion-exclusion principle, we get:
\[ \mc R(x)=\sum_{N}
	\bigg(\prod_{i=1}^r\mu(n_i)\bigg)
	\cdot\cset{\p :\N\p\leq x,\, \forall i\ n_it_i\mid\ind_\p(\alpha_i) ,\, \begin{pmatrix}\p \\ F/K  \end{pmatrix}\subseteq C}\,. \]
If $\p$ is of degree $1$, then by \cite[Lemma 2]{zieg} the condition $n_it_i\mid\ind_\p(\alpha_i)$ holds if and only if  $\p$ splits completely in $K(\zeta_{n_it_i},\alpha_i^{1/n_it_i})$. Moreover, $\p$ splits completely in each of these fields, for $1\leq i\leq r$, if and only if it splits completely in their compositum. Hence we can write $\mc R(x)$ as
\[ \sum_{N}
	\bigg(\prod_{i=1}^r\mu(n_i)\bigg)\cdot
	\av{\left\{\p: \N\p\leq x, \, 
		\begin{pmatrix}\p \\ K_{N,T}/K  \end{pmatrix}=\id,\, \begin{pmatrix}\p \\ F/K  \end{pmatrix}\subseteq C\right\}}+\bo{\frac{\sqrt{x}}{\log x}}.\]
\end{rem}

For real numbers $\xi,\eta\geq1$, fix an $r$-tuple $T$ and define the sets
\[ \mc M_\xi:=\set{\p:\forall i \ t_i\mid\ind_\p(\alpha_i)  \ \text{ and }\ t_iq\nmid \ind_\p(\alpha_i)\,\forall q<\xi \text{ prime},\, \begin{pmatrix}\p \\ F/K  \end{pmatrix}\subseteq C}\, , \]
\[ \mc M_{\xi,\eta}:=\set{\p:  t_iq\mid\ind_\p(\alpha_i)\text{ for some }i \text{ and some } \xi\leq q<\eta\text{ prime},\, \begin{pmatrix}\p \\ F/K  \end{pmatrix}\subseteq C} \]
(where we tacitly exclude the finitely many primes $\p$ of $K$ appearing in the factorization of the $\alpha_i$'s or ramifying in $F$).

Since for $\p$ with $\N\p\leq x$ we have $\ind_\p(\alpha_i)\mid\N\p-1<\floor{x}$,  it is clear that we have
\[
\mc R(x)=\mc M_{\floor{x}}(x)\,.
\]
Setting $\xi:=\frac{1}{6r}\log x$ and $\eta:=\floor{x}$, we have $\mc M_\eta(x)\leq \mc M_\xi(x)$. On the other hand, $\mc M_{\eta}(x)$ can be obtained by subtracting from $\mc M_\xi(x)$ the number of those primes $\p$ satisfying $t_iq\nmid \ind_\p(\alpha_i)$ for all $i$ and for all prime numbers $q<\xi$ but with $t_iq\mid\ind_\p(\alpha_i)$ for some $i$ and some prime $\xi\leq q<\eta$, so that
\[ \mc M_\eta(x)\geq \mc M_\xi(x)-\mc M_{\xi,\eta}(x)\,. \]
Therefore we get
\begin{equation}
\mc R(x)= \mc M_\xi(x)+O\big(\mc M_{\xi,\eta}(x)\big)\,.
\label{eq4}
\end{equation}

First we estimate the main term $\mc M_\xi(x)$.
\begin{lem}\label{lem-index-main} Assume (GRH). Let $x\geq t_i^{3}$ for all $i$. Then we have
\[ \mc M_\xi(x)=\frac{x}{\log x}\sum_{N}\frac{(\prod_{i}\mu(n_i))c'(N,T)}{[F_{N,T}:K]}+O\left(\frac{x}{\log^2 x}\right)\,, \]
where $c'(N,T)$ is defined in \eqref{c-fct}.\end{lem}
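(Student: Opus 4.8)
The plan is to adapt the argument for $r=1$ in \cite[Section 3]{zieg}. Set $P(\xi):=\prod_{q<\xi}q$, where $\xi=\tfrac{1}{6r}\log x$. First I would run the inclusion–exclusion of Remark~\ref{rem-inclexcl}, now applied to the conditions ``$t_iq\nmid\ind_\p(\alpha_i)$'' as $q$ runs over the primes below $\xi$ and $i$ over $\{1,\dots,r\}$; this expresses $\mc M_\xi(x)$ as the finite sum
\[
\sum_{\substack{N\\ n_i\mid P(\xi)\ \forall i}}\Big(\prod_i\mu(n_i)\Big)\cdot\cset{\p:\N\p\leq x,\ n_it_i\mid\ind_\p(\alpha_i)\ \forall i,\ \begin{pmatrix}\p\\ F/K\end{pmatrix}\subseteq C}\,.
\]
As in the remark, discarding the primes of $K$ that are not of degree $1$ costs $\bo{\sqrt x/\log x}$ in each term (by \cite[Lemma 1]{zieg}), and for degree-$1$ primes \cite[Lemma 2]{zieg} rewrites the remaining conditions as the requirement that the Frobenius of $\p$ in $F_{N,T}/K$ lie in the union of conjugacy classes $\{\sigma:\sigma|_{K_{N,T}}=\id,\ \sigma|_F\in C\}$, which by definition has $c'(N,T)$ elements.

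Next I would apply the effective Chebotarev density theorem under (GRH) to the extension $F_{N,T}/K$, in a form in which the error is weighted by the density (as in \cite[Theorem 2]{zieg}); each of the above counts then equals
\[
\frac{c'(N,T)}{[F_{N,T}:K]}\Li(x)+\bo{\frac{c'(N,T)}{[F_{N,T}:K]}\sqrt x\,\log\av{d_{F_{N,T}}}+c'(N,T)\sqrt x\,\log x}\,.
\]
To bound the error uniformly in $N$, I would use that $c'(N,T)/[F_{N,T}:K]\leq 1/[K_{N,T}:K]$, that Corollary~\ref{cor-bound} yields $[K_{N,T}:K]\gg\varphi([n_1t_1,\dots,n_rt_r])\prod_in_it_i$, and that Proposition~\ref{prop-discr} yields $\log\av{d_{F_{N,T}}}\ll\varphi([n_1t_1,\dots,n_rt_r])\big(\prod_in_it_i\big)\log\big(2\prod_in_it_i\big)$. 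Since $n_i\mid P(\xi)$ forces $\prod_in_i<4^{r\xi}=x^{(\log2)/3}$ and $x\geq t_i^3$ forces $\prod_it_i\leq x^{r/3}$, these inputs make the first error summand $\bo{\sqrt x\log x}$; and $c'(N,T)\leq[F_{N,T}:K_{N,T}]$, which divides $[F:K]$, so $c'(N,T)=\bo{1}$ and the second summand is $\bo{\sqrt x\log x}$ as well.

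Summing over the $\leq 2^{r\pi(\xi)}\leq 2^{r\xi}=x^{(\log2)/6}$ multi-indices $N$ with $n_i\mid P(\xi)$, the accumulated errors (including the degree-$1$ reduction) contribute $\bo{x^{5/8}\log x}=\bo{x/\log^2 x}$. Using $\Li(x)=\tfrac{x}{\log x}+\bo{x/\log^2x}$ together with the absolute convergence of $\sum_N\tfrac1{[K_{N,T}:K]}$ (Corollary~\ref{cor-convergence1}), the main terms become $\tfrac{x}{\log x}\sum_{N:\,n_i\mid P(\xi)}\tfrac{(\prod_i\mu(n_i))c'(N,T)}{[F_{N,T}:K]}+\bo{x/\log^2 x}$. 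The last step would be to extend the truncated summation to all $N$: in each discarded term some $n_i$ is squarefree (otherwise $\mu(n_i)=0$) with a prime factor $\geq\xi$, hence $n_i\geq\xi$, so by Corollary~\ref{cor-bound} and Theorem~\ref{pollack} (applied after permuting the indices) this tail is $\bo{1/\xi}=\bo{1/\log x}$ and contributes $\bo{x/\log^2 x}$. This yields the claimed formula. The step I expect to be the main obstacle is the Chebotarev error analysis: because the $t_i$ may be as large as $x^{1/3}$, the fields $F_{N,T}$ can have degree and log-discriminant far exceeding $x$, so a crude effective Chebotarev estimate is useless, and one genuinely needs the density-weighted error bound combined with the sharp discriminant estimate of Proposition~\ref{prop-discr} to keep each term's error at $\bo{\sqrt x\log x}$.
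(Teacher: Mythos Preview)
Your proposal is correct and follows essentially the same route as the paper's proof: inclusion--exclusion over $n_i\mid P(\xi)$, effective Chebotarev under GRH for each $F_{N,T}/K$, control of the per-term error via Corollary~\ref{cor-bound} and Proposition~\ref{prop-discr}, the count $2^{r\pi(\xi)}\leq x^{O(1/r)\cdot r}=x^{O(1)}$ of admissible $N$, and finally completion of the sum over all $N$ using Theorem~\ref{pollack}. The only cosmetic differences are in the numerical exponents (you get $x^{5/8}\log x$ where the paper writes $x^{2/3}\log x$) and in how you split the Chebotarev error into two pieces; both are equivalent and well within $O(x/\log^2 x)$.
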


Notice that by definition, the coefficients $c'(N,T)$ are bounded by the size of $C$ and hence by $[F:K]$, independently of $N,T$.

\begin{proof} Denote by $E$ the set of the positive squarefree integers  which can be written as a product of primes $q$ with $q<\xi$. Applying the inclusion-exclusion principle as in Remark \ref{rem-inclexcl} yields
\[ \mc M_\xi(x)=\sum_{\substack{N \\ n_i\in E}}\bigg(\prod_{i=1}^r\mu(n_i)\bigg)\cdot
\cset{\p: \N\p\leq x,\,\begin{pmatrix}\p \\ F_{N,T}/K  \end{pmatrix}\subseteq C_{N,T}}+\bo{\frac{\sqrt{x}}{\log x}}\,,  \]
where $C_{N,T}$ is defined by
\[ C_{N,T}=\set{\sigma\in\Gal(F_{N,T}/K):\, \sigma|_{K_{N,T}}=\id,\,\sigma|_F\in C} \]
and has size $c'(N,T)$ (see \eqref{c-fct}).

Since we are assuming (GRH), by the effective Chebotarev Density Theorem (see for instance \cite[Theorem 2]{zieg}) the number of primes $\p$ of $K$ which are unramified  in $F_{N,T}$ and such that $\N\p\leq x$ and the Frobenius conjugacy class of $\p$ is contained in $C_{N,T}$ is given by (recalling that $c'(N,T)\leq [F:K]$)
\[ \Li(x)\frac{c'(N,T)}{[F_{N,T}:K]}
+O\left(\frac{\sqrt{x}\log \left(x^{[F_{N,T}:\Q]}\cdot\av{d_{F_{N,T}}} \right)}{[F_{N,T}:K]}\right)\,, \]
where $d_{F_{N,T}}$ is the absolute discriminant of $F_{N,T}$.
Then we write $\mc M_\xi(x)$ as the multiple sum  
\begin{equation}\label{eq-m}
\Li(x)\sum_{\substack{N\\ n_i\in E}}\frac{(\prod_{i}\mu(n_i))c'(N,T)}{[F_{N,T}:K]}+ O\,\Bigg(\sum_{\substack{N\\ n_i\in E}}\frac{\sqrt{x}\log\left(x^{[F_{N,T}:\Q]}\cdot\av{d_{F_{N,T}}} \right)}{[F_{N,T}:K]}  \Bigg).
\end{equation}
We can decompose the  $O$-term in two parts, the first one being
\[ O\,\Bigg(\sqrt{x}\log x\cdot\sum_{\substack{N\\ n_i\in E}}1\Bigg)
	=\bo{\sqrt{x}\log x\av{E}^r}=O\left(x^{2/3}\log x\right)\,, \]
as we have $\av{E}\leq 2^{\pi(\xi)}\leq e^\xi=x^{1/6r}$, where $\pi$ is the prime counting function. In particular, this shows that the error term in \eqref{eq-m} includes $\bo{\sqrt{x}/\log x}$.

As for the second part of the $O$-term, applying Corollary \ref{cor-bound} first and then Proposition \ref{prop-discr}, we obtain
\begin{align*}
& \ O\left(\sqrt{x}\sum_{\substack{N\\ n_i\in E}}\frac{\log\av{d_{F_{N,T}}}}{\varphi([n_1t_1,\ldots,n_rt_r])n_1t_1\cdots n_rt_r}  \right)  \\
=&\ \bo{\sqrt{x}\sum_{\substack{N\\ n_i\in E}}\Big(\log([n_1t_1,\ldots,n_rt_r]n_1t_1\cdots n_rt_r)+\bo{1}\Big)}  \\
=&\ \bo{\sqrt{x}\cdot 2\sum_{\substack{N\\ n_i\in E}}\bigg(\sum_{j=1}^r\log n_j\bigg)+\sqrt{x}\cdot2\sum_{\substack{N\\ n_i\in E}}\bigg(\sum_{j=1}^r\log t_j\bigg)}  \\
=&\ \bo{\sqrt{x}\av{E}^{r-1}\cdot\sum_{k\in E}\log k} +\bo{ \sqrt{x}\av{E}^{r}\cdot\log\big( \max_i (t_i)\big)}.
\end{align*}
By assumption we have $\log t_i=\bo{\log x}$ for all $i$, whereas 
since the largest integer in $E$ is $\prod_{q<\xi}q$, where $q$ runs through rational primes, we have
\[ \sum_{k\in E}\log k\leq \av{E}\cdot\sum_{q<\xi}\log q\leq\av{E}\cdot\xi \leq x^{1/6r}\log x, \]
where the second inequality follows by \cite[Theorem 415]{hawr}, and the last one by recalling that $\av{E}\leq x^{1/6r}$ and $\xi\leq\log x$.
Thus, making use of these estimates, also these error terms are reduced to $\bo{x^{2/3}\log x}$.

We now focus on the main term of \eqref{eq-m} and we will estimate the tail of the series as
\[ \Bigg\lvert\sum_{\substack{N\\ n_i\notin E \text{ for some }i}}\frac{\prod_i\mu(n_i)c'(N,T)}{[F_{N,T}:K]}\Bigg\rvert\leq
\sum_{\substack{N\\ n_i\notin E' \text{ for some }i}}\frac{c'(N,T)}{[F_{N,T}:K]}\,, \]
where $E'$ is the set of all positive integers  whose prime factors $q$ satisfy $q<\xi$. Then we bound the latter series of nonnegative terms by
\[  \Bigg(\sum_{\substack{N \\ n_1\notin E' }}
+ \ldots+\sum_{\substack{N \\ n_r\not\in E' }}\Bigg) \frac{c'(N,T)}{[F_{N,T}:K]}
\leq \Bigg(\sum_{\substack{N \\ n_1\geq\xi}}
+ \ldots+\sum_{\substack{N \\ n_r\geq\xi}}\Bigg)\frac{c'(N,T)}{[F_{N,T}:K]}\,.  \]
Since $c'(N,T)\leq [F:K]$, applying Corollary \ref{cor-bound} and Theorem \ref{pollack}, we can estimate each series by $\bo{1/\xi}=\bo{1/\log x}$. Using that $\Li(x)=\bo{x/\log x}$ and summing up all the errors, we obtain $O(x/\log^2x)$.
Finally, because of the formula $\Li(x)=x/\log x+O(x/\log^2x)$, we can replace $\Li(x)$ with $x/\log x$ in the the main term of $\mc M_\xi(x)$ as the multiple series converges by Corollary \ref{cor-convergence1}.
\end{proof}

Let us now focus on the error term of \eqref{eq4}. 

\begin{lem}\label{lem-index-error}
Assume (GRH). Let $x\geq t_i^3$ for all $i$.  Then we have
\[ \mc M_{\xi,\eta}(x)=\bo{\frac{x}{\log^2x}}+\bo{\frac{x\log\log x}{\log^2x}\sum_{i=1}^r\frac{1}{\varphi(t_i)}}\,. \]
\end{lem}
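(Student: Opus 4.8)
The plan is to bound $\mc M_{\xi,\eta}(x)$ by counting, for each index $i$ and each prime $q$ in the dyadic-type range $\xi \leq q < \eta$, the primes $\p$ with $\N\p\leq x$ such that $t_iq\mid\ind_\p(\alpha_i)$. By a union bound over $i$ and $q$,
\[
\mc M_{\xi,\eta}(x)\leq \sum_{i=1}^r\ \sum_{\xi\leq q<\eta}\cset{\p:\N\p\leq x,\ t_iq\mid\ind_\p(\alpha_i)}\,.
\]
For a prime $\p$ of degree $1$, the condition $t_iq\mid\ind_\p(\alpha_i)$ is equivalent (by \cite[Lemma 2]{zieg}, as in Remark \ref{rem-inclexcl}) to $\p$ splitting completely in $K(\zeta_{t_iq},\alpha_i^{1/t_iq})$; the non-degree-$1$ primes contribute only $\bo{\sqrt x/\log x}$ in total. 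So the heart of the matter is to estimate $\pi_i(x,q):=\cset{\p:\N\p\leq x,\ \p\text{ splits completely in }K(\zeta_{t_iq},\alpha_i^{1/t_iq})}$ summed over $q$.

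The next step is to split the range $\xi\leq q<\eta$ according to whether $q$ is small enough that effective Chebotarev under GRH gives a nontrivial bound, or so large that one must instead use a trivial upper bound. For the Chebotarev regime I would apply the effective prime-counting bound (as in the proof of Lemma \ref{lem-index-main}) to get
\[
\pi_i(x,q)=\bo{\frac{x}{[K(\zeta_{t_iq},\alpha_i^{1/t_iq}):K]\log x}}+\bo{\sqrt x\,\log\bigl(x^{[K(\zeta_{t_iq},\alpha_i^{1/t_iq}):\Q]}\av{d}\bigr)}\,,
\]
where the degree is $\gg \varphi(t_iq)\,t_iq/B \gg \varphi(t_i)\varphi(q)q$ by Corollary \ref{cor-bound}, and the discriminant term is controlled by Proposition \ref{prop-discr}. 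Summing the main term over primes $q$ up to the cutoff gives, by Mertens' theorem, a factor $\sum_{q\leq y}\tfrac{1}{\varphi(q)q}\ll \sum_{q}\tfrac{1}{q^2}$ convergent, but the relevant saving is the $1/\varphi(t_i)$ and the range: keeping $q\geq\xi\asymp\log x$, the tail $\sum_{q\geq\xi}\tfrac{1}{\varphi(q)q}=\bo{1/(\xi\log\xi)}$ or similar, which after multiplying by $x/\log x$ and $1/\varphi(t_i)$ yields the claimed $\bo{x\log\log x/(\varphi(t_i)\log^2 x)}$ term. The error (second) term in Chebotarev, summed over $q$ and $i$, must be shown to be $\bo{x/\log^2 x}$; this forces the upper cutoff for the Chebotarev regime to be roughly $q\leq x^{1/2}/\log^{O(1)}x$ (or a power like $x^{1/6}$, matching the choices made for $\xi$), so that $\sqrt x\cdot q\cdot(\text{poly-log})$ summed over such $q$ stays below $x/\log^2 x$.

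For the complementary regime of large $q$ (those exceeding the cutoff, up to $\eta=\floor{x}$), Chebotarev is useless, so instead I would use the elementary fact that if $t_iq\mid\ind_\p(\alpha_i)$ then $t_iq\mid\N\p-1$, hence $\N\p\equiv 1\pmod{t_iq}$; the number of prime ideals $\p$ of $K$ of degree $1$ with $\N\p\leq x$ in a fixed residue class mod $m$ is $\bo{x/(\varphi(m)\log(x/m))}$ unconditionally (Brun–Titchmarsh over $K$, or just counting rational primes $p\equiv 1\pmod{t_iq}$ with $p\leq x$ and multiplying by $[K:\Q]$), and summing $\bo{x/(\varphi(t_iq)\log x)}$ over $q$ in the large range again gives a convergent-tail saving. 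The main obstacle I expect is precisely calibrating these two ranges and the auxiliary estimates so that both the Chebotarev error and the large-$q$ count land inside $\bo{x/\log^2 x}+\bo{x\log\log x\,\varphi(t_i)^{-1}/\log^2 x}$ — in particular isolating the $\log\log x$, which should come from $\sum_{\xi\leq q}\tfrac{1}{q^2}$ being too lossy and instead needing $\sum_{q}\tfrac{1}{\varphi(q)q}\asymp$ a genuine prime sum, or from the Brun–Titchmarsh $\log(x/t_iq)$ denominator degrading near the top of the range; the $1/\varphi(t_i)$ factor is uniform and comes straight out of the degree lower bound $[K(\zeta_{t_iq},\alpha_i^{1/t_iq}):K]\gg\varphi(t_i)q^2$, so the care is all in the $q$-summation and the cutoff bookkeeping, which I would model closely on \cite[Section 3]{zieg}.
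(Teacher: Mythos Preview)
Your reduction by a union bound over $i$ is exactly what the paper does, and the general shape---split the $q$-range and treat each piece with a different tool---is also right. The paper simply cites \cite[Lemmas 9, 10, 11]{zieg} for each fixed $i$, which amounts to a \emph{three}-range split (following Hooley), and this third range is where your plan has a real gap.

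Your large-$q$ regime relies on Brun--Titchmarsh alone: you bound $\pi_i(x,q)$ by $O\bigl(x/(\varphi(t_iq)\log(x/t_iq))\bigr)$ and then assert that summing over $q$ beyond the Chebotarev cutoff ``gives a convergent-tail saving''. It does not. For $q$ prime, $\varphi(t_iq)\asymp\varphi(t_i)q$, and $\sum_{q}1/q$ over primes in an interval of the form $(x^{1/2}/\log^c x,\,x/t_i)$ is bounded below by a positive constant (Mertens), while $\log(x/t_iq)$ degenerates near the top of the range; the best you can extract from this is $O\bigl(x/(\varphi(t_i)\log x)\bigr)$, a full factor of $\log x$ short of the target. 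Put differently, Brun--Titchmarsh only exploits the congruence $t_iq\mid\N\p-1$, and that information is too weak once $q>\sqrt{x}\log x$.

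What is missing is Hooley's algebraic device (this is \cite[Lemma 9]{zieg}): if $t_iq\mid\ind_\p(\alpha_i)$ then $\alpha_i^{m}\equiv1\pmod\p$ with $m=(\N\p-1)/(t_iq)<\sqrt{x}/(t_i\log x)$ whenever $q>\sqrt{x}\log x$. Writing $\alpha_i=\beta_i/\gamma_i$ with $\beta_i,\gamma_i\in\OO_K$, one gets $\p\supseteq(\beta_i^{m}-\gamma_i^{m})$, and summing the number of prime-ideal divisors of these elements over all such small $m$ produces the $O(x/\log^2 x)$ term. Brun--Titchmarsh is used only on the intermediate range (roughly $x^{1/2}/\log^2 x<q\leq\sqrt{x}\log x$), and there the Mertens sum $\sum 1/q$ is $O(\log\log x/\log x)$---this is the actual source of the $\log\log x$ factor, not the tail of $\sum_q 1/(\varphi(q)q)$ as you speculated. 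So the correct picture is three ranges, with the $\log\log x$ coming from the middle one and the $O(x/\log^2 x)$ from the top one; the paper's proof records exactly this by invoking Ziegler's three lemmas and noting the adjustment needed when $\alpha_i\notin\OO_K$.
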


\begin{proof}
We can bound $\mc M_{\xi,\eta}(x)$ by 
\[
\sum_{i=1}^r\cset{\p: \N\p\leq x,\, t_iq\mid\ind_\p(\alpha_i)\text{ for some prime } \xi\leq q<\eta, \begin{pmatrix}\p \\ F/K  \end{pmatrix}\subseteq C}\,,
\]
and we can conclude directly because each of these terms can be bounded by the sum of three errors (see \cite[page 73]{zieg}) which are estimated in \cite[Lemmas 9, 10, 11]{zieg}. Notice that our different value for $\xi$ does not change the proof of \cite[Lemma 11]{zieg}, whereas \cite[Lemmas 9, 10]{zieg} do not depend on $\xi$. 

Moreover, it is straightforward to see that these Lemmas hold also for algebraic numbers (see also \cite[Proof of Proposition 5.1]{PS1}). In fact, in \cite[Proof of Lemma 9]{zieg}, if $\alpha=\beta/\gamma$ with $\beta,\gamma\in\OO_K$, then the congruence $\alpha^{(\N\p-1)/tq}\equiv 1\bmod\p$ yields the inclusion of integral ideals $\p\supseteq(\beta^{(\N\p-1)/tq}-\gamma^{(\N\p-1)/tq})$, and then proceeding with the original proof we set $A$ to be the maximum of $\{1\}\cup\{\av{\sigma(\beta)},\av{\sigma(\gamma)}:\sigma\in\Gal(K/\Q)\}$.
\end{proof}

\begin{proof}[Proof of Theorem \ref{thm-index1}]
The statement follows by invoking formula \eqref{eq4} and  applying Lemmas \ref{lem-index-main} and \ref{lem-index-error}.
\end{proof}

\section{Setting conditions on the index}\label{sec-index2}
In this section we prove Theorem \ref{thm-index}, keeping the notation of the introduction.
The following result is a variant of \cite[Lemma 13]{zieg}.
\begin{lem}\label{lem-index}
Assume (GRH). Let $\gamma$ be a nonzero algebraic number of $K$ which is not a root of unity. Let $0<\rho<1$. We have that
\[ \cset{\p:\N\p\leq x,\,\ind_\p(\gamma)>(\log x)^{\rho}}=\bo{\frac{x}{(\log x)^{1+\rho}}}+\bo{\frac{x}{(\log x)^{2-\rho} }}\,. \]
\end{lem}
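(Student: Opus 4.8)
The strategy is to reduce the statement about the index to a splitting condition in Kummer extensions, apply inclusion–exclusion (as in Remark \ref{rem-inclexcl}), and then estimate the resulting sum of Chebotarev contributions using the discriminant bound from Proposition \ref{prop-discr} together with the divisibility/degree bounds of Corollary \ref{cor-bound}. Concretely, I would first note that (up to the negligible $\bo{\sqrt{x}/\log x}$ coming from primes not of degree $1$, via \cite[Lemma 1]{zieg}) the condition $\ind_\p(\gamma)>(\log x)^\rho$ can be detected by summing over integers $m$ with $m>(\log x)^\rho$ the condition $m\mid\ind_\p(\gamma)$, i.e.\ $\p$ splits completely in $K(\zeta_m,\gamma^{1/m})$, with a M\"obius weight to avoid overcounting. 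Since $\ind_\p(\gamma)\mid\N\p-1<x$, only $m<x$ contribute, so the range is $(\log x)^\rho<m<x$.

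Next I would bound the count by
\[
\sum_{(\log x)^\rho<m<x}\cset{\p:\N\p\leq x,\ \p\text{ splits completely in }K(\zeta_m,\gamma^{1/m})}
\]
and split this at some threshold, say $m\leq\sqrt{x}$ versus $m>\sqrt{x}$. For the large-$m$ range one can observe that a prime $\p$ of degree $1$ with $\N\p\leq x$ splitting completely in $K(\zeta_m,\gamma^{1/m})$ forces $m\mid\N\p-1$ and $m\mid\ind_\p(\gamma)$, and each such prime is counted only $\bo{x^\varepsilon}$ times (divisor bound \eqref{tau}); a direct count of primes with $m\mid\N\p-1$ for a single large $m$ gives $\bo{x/(m\log x)}$ or one simply uses that the number of primes $\p$ with $\N\p\le x$ and $\ind_\p(\gamma)>\sqrt x$ is small. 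For the range $(\log x)^\rho<m\leq\sqrt{x}$ I would apply the effective Chebotarev density theorem under GRH (as in the proof of Lemma \ref{lem-index-main}): the main term is $\Li(x)/[K(\zeta_m,\gamma^{1/m}):K]$, which by Corollary \ref{cor-bound} is $\bo{x/(\varphi(m)m\log x)}$, and summing over $m>(\log x)^\rho$ gives, via the one-dimensional case of Theorem \ref{pollack} (i.e.\ $\sum_{m>y}1/(m\varphi(m))=\bo{1/y}$), a contribution of $\bo{x/((\log x)^{1+\rho})}$. The Chebotarev error term $\sqrt{x}\log(x^{[K(\zeta_m,\gamma^{1/m}):\Q]}|d_{K(\zeta_m,\gamma^{1/m})}|)/[K(\zeta_m,\gamma^{1/m}):K]$ is controlled exactly as in Lemma \ref{lem-index-main}: by Proposition \ref{prop-discr} the logarithm of the discriminant is $\bo{\varphi(m)m\log m}$, so each term is $\bo{\sqrt{x}\log m}$, and summing over $m\leq\sqrt{x}$ yields $\bo{x^{1/2}\cdot\sqrt x\log x}$... which is too big, so instead I would cut the Chebotarev range at $m\le x^{1/4}$ (say), handle $x^{1/4}<m\le\sqrt x$ by the crude counting argument, and thereby keep every error below $\bo{x/\log^2 x}$. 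The second stated error term $\bo{x/(\log x)^{2-\rho}}$ is what emerges when one balances the truncation: it is the cost of replacing $\Li(x)$ by $x/\log x$ and of the tail estimate at the lower cutoff $(\log x)^\rho$.

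The main obstacle is purely bookkeeping of error terms: one must choose the intermediate truncation point for $m$ so that the GRH-Chebotarev error (which grows with $m$) and the elementary large-sieve-type count for large $m$ both stay within $\bo{x/\log^2 x}$, while the genuine main contribution from $m>(\log x)^\rho$ produces the advertised $\bo{x/(\log x)^{1+\rho}}$. Once the cutoffs are fixed, each piece is a routine application of results already established: inclusion–exclusion and degree-$1$ reduction as in Remark \ref{rem-inclexcl}, Corollary \ref{cor-bound} for the degree lower bound, Proposition \ref{prop-discr} for the discriminant, the one-variable case of Theorem \ref{pollack} for the tail sum, and effective Chebotarev under GRH exactly as invoked in the proof of Lemma \ref{lem-index-main}.
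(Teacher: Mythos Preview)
Your plan diverges from the paper's argument and contains a real gap. The paper does not reprove the lemma; it simply points to \cite[Lemma~13]{zieg} (the case $\rho=1/2$) and notes that running that proof with $y:=\lfloor(\log x)^\rho\rfloor$ produces the two stated errors. Ziegler's method is the \emph{complement} argument: write
\[
\cset{\p:\N\p\le x,\ \ind_\p(\gamma)>y}
=\pi_K(x)-\sum_{t\le y}\cset{\p:\N\p\le x,\ \ind_\p(\gamma)=t},
\]
apply the $r=1$ case of Theorem~\ref{thm-index1} to each of the $y$ summands (legitimate since $t\le y\ll x^{1/3}$), and read off the two errors: the accumulated Chebotarev remainder $\sum_{t\le y}\bo{x/\log^2 x}=\bo{xy/\log^2 x}=\bo{x/(\log x)^{2-\rho}}$, and the tail of the density series $\sum_{t>y}D_t=\bo{1/y}$ giving $\bo{x/(y\log x)}=\bo{x/(\log x)^{1+\rho}}$.

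Your direct route via the overcount
\[
\sum_{y<m<x}\cset{\p:\N\p\le x,\ m\mid\ind_\p(\gamma)}
\]
breaks down in the large-$m$ range, and the ``crude counting argument'' you allude to does not close it. Once $m>x^{1/4}$ (or any fixed power of $x$), none of the available elementary bounds is adequate when summed over $m$: Brun--Titchmarsh gives $\#\{\p:m\mid\N\p-1\}\ll x/(\varphi(m)\log x)$, but $\sum_{x^{1/4}<m<\sqrt x}1/\varphi(m)\asymp\log x$, producing $\bo{x}$; the small-order count $\#\{\p:\ord_\p(\gamma)\le w\}=\bo{w^2}$ yields only $\#\{\p:\ind_\p(\gamma)>\sqrt x\}=\bo{x}$; and the divisor-bound observation merely shows that your overcount is $\bo{x^\varepsilon}$ times the very quantity you are trying to estimate, which is circular. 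In short, summing over \emph{all} divisors $m>y$ introduces an unavoidable loss that the complement method sidesteps entirely, because there one sums over only $(\log x)^\rho$ values of $t$. This is also why your explanation of the term $\bo{x/(\log x)^{2-\rho}}$ is off: it is not the cost of replacing $\Li(x)$ by $x/\log x$, but the cost of applying the fixed-index asymptotic $y=(\log x)^\rho$ times.
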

Notice that we are discarding the finitely many primes $\p$ of $K$ appearing in the factorization of $\gamma$.

\begin{proof}
We only point out the modification with respect to the proof of \cite[Lemma 13]{zieg} (where $\rho=1/2$).
Let $y:=\floor{(\log x)^{\rho}}$. Following the original proof, the error terms that we obtain are:
\[ \bo{\frac{xy}{\log^{2} x}}=\bo{\frac{x}{(\log x)^{2-\rho} }} \]
\[ \bo{\frac{x}{y\log x}}=\bo{\frac{x}{(\log x)^{1+\rho}}}\,. \]
Notice that  $\bo{\sqrt{x}/\log x}$ is included in these error terms.
\end{proof}

\begin{proof}[Proof of Theorem \ref{thm-index}]
We take $0<\rho\leq1/2$, so that the set considered in the previous Lemma has size $O(x/(\log x)^{1+\rho})$. Write $y:=(\log x)^{\rho}$, and write $\mc R_T$ for the set $\mc R$ in \eqref{r-set} to make the dependence on the $r$-tuple $T$ explicit. Then we can partition the set $\mc S$ as the disjoint union of all sets $\mc R_T$ with $t_i\in S_i$ for all $i$.  We have
\begin{equation}\label{eq-tail}
\sum_{\substack{T \\ t_i\in S_i }}\mc R_{T}(x)-\sum_{\substack{T \\ t_i\leq y,\,t_i\in S_i}}\mc R_{T}(x)\leq \sum_{\substack{ T\\ t_1>y }}\mc R_{T}(x)+\ldots+
\sum_{\substack{ T \\ t_r>y}}\mc R_{T}(x)\,.
\end{equation}
Applying Lemma \ref{lem-index}, each multiple series on the right-hand side can be bounded by
\[ \cset{\p:\N\p\leq x,\,\ind_\p(\alpha_i)>y}=\bo{\frac{x}{(\log x)^{1+\rho}}}\,, \]
respectively. This yields the formula
\begin{equation}\label{s-r-formula}
\mc S(x)=\sum_{\substack{T \\ t_i\leq y,\,t_i\in S_i}}\mc R_{T}(x)+\bo{\frac{x}{(\log x)^{1+\rho}}}\,.
\end{equation}

We now replace the asymptotic \eqref{r-asymptotic} for the functions $\mc R_T(x)$ in \eqref{s-r-formula}, as $x>y^3$. Let us first focus on the main term that we obtain, namely
\begin{equation}\label{multisum0}
\frac{x}{\log x}
	\sum_{\substack{T \\ t_i\leq y,\, t_i\in S_i }}
	\sum_{N}\frac{(\prod_{i}\mu(n_i))c'(N,T)}{[F_{N,T}:K]}\,.
\end{equation}
Call $D_T(x)$ the (inner) multiple series on the multi-indices $N$ appearing in \eqref{multisum0} and notice that $D_T(x)\geq0$. Similarly to \eqref{eq-tail}, we have
\begin{equation}\label{multisum1}
\sum_{\substack{T \\ t_i\in S_i }}D_T(x)-\sum_{\substack{T \\ t_i\leq y,\, t_i\in S_i}}D_T(x)\leq \sum_{\substack{ T \\ t_1>y }}D_T(x)+\ldots+
\sum_{\substack{ T \\ t_r>y }}D_T(x)\,.
\end{equation}
Since $[n_1t_1,\ldots,t_rn_r]$ is a multiple of $[t_1,n_1,\ldots,t_r,n_r]$,
applying Corollary \ref{cor-bound} we have
\[ \frac{1}{[F_{N,T}:F]}\leq\frac{B[F:\Q]}{\varphi([t_1,n_1,\ldots,t_r,n_r])t_1n_1\cdots t_rn_r}\,, \]
and hence by Theorem \ref{pollack} each series on the right-hand side of \eqref{multisum1}, multiplied by $x/\log x$, has size (recalling $c'(N,T)\leq [F:K]$)
\[ \bo{\frac{x}{y\log x}}=\bo{\frac{x}{(\log x)^{1+\rho}}}\,. \]

Next we study the error terms that we obtain when replacing $\mc R_T(x)$ in \eqref{s-r-formula}.  The first part of the $O$-term of \eqref{r-asymptotic} gives
\[ \bo{\frac{xy^r}{\log^{2}x}}=\bo{\frac{x}{(\log x)^{2-\rho r}}}\,, \]
where we suppose that $\rho$ satisfies $2-\rho r>0$. Recall the formula $\sum_{k<y}1/\varphi(k)=\bo{\log y}$ (see for instance \cite[Lemma 8]{zieg}). Then, for each $j\in\{1,\ldots,r\}$, the second part of the $O$-term of \eqref{r-asymptotic} yields the sum of errors
\begin{align*}
&\ O \Bigg(\frac{x\log\log x}{\log^2x}\sum_{\substack{T \\ t_i\leq y }}\frac{1}{\varphi(t_j)}\Bigg)
= \bo{\frac{x\log\log x}{\log^2 x}\cdot y^{r-1}\log y} \\
= & \ \bo{\frac{x(\log\log x)^2}{(\log x)^{2-\rho(r-1)}}}=\bo{\frac{x}{(\log x)^{a}}\frac{(\log\log x)^2}{(\log x)^{b}}}
\end{align*}
where we   take $a,b>0$ such that $a+b=2-\rho(r-1)$. Note that $b$ can be chosen arbitrarily small, because $(\log\log x)^2/(\log x)^{b}$ tends to zero as $x\rightarrow\infty$ for any $b>0$. Therefore, for $j\in\{1,\ldots,r\}$, this error term becomes
$O(x/(\log x)^{a})$ for some $0<a<2-\rho(r-1)$ and thus can be included in $O(x/(\log x)^{2-\rho r})$. 

It remains to choose a suitable value for $\rho$. In the  $O$-terms we have the exponents $1+\rho$ and $2-\rho r$.
A possible choice is $\rho=1/(r+1)$, yielding the error term
\[ \bo{\frac{x}{(\log x)^{1+\frac{1}{r+1}}}}\,.  \]
This concludes the proof.
\end{proof}

\section{The asymptotic formula for the order}\label{sec-order}
In this section we obtain an asymptotic formula for the function $\mc P(x)$ of Theorem \ref{main} by expressing it as a sum  of functions of the type $\mc R(x)$. Let us keep the notation of the introduction.

Let $\p\in \mc P$ be of degree $1$, and call $p$ the rational prime below $\p$. Because of the identity $\ord_\p(\alpha_i)\cdot\ind_\p(\alpha_i)= \N \p-1$, the condition $\ord_{\p}(\alpha_i)\equiv a_i\bmod d_i$ is  equivalent to $\ind_\p(\alpha_i)=t_i$ and $p\equiv 1+a_it_i\bmod d_it_i$. We define the sets of primes of $K$ satisfying these conditions by setting
\[ \mc V_{T}:= \left\{ \p : \forall i \
\ind_\p(\alpha_i)=t_i,\, p\equiv 1+a_it_i\bmod d_it_i, \,
	\begin{pmatrix}\p \\ F/K  \end{pmatrix}\subseteq C\right\}\,. \]
Notice that the sets $\mc V_T$ give a partition of $\mc P$  as the multi-index $T$  varies, up to discarding the primes which are not of degree $1$. Thus we have
\begin{equation}
\mc P(x)=\sum_{T}\mc V_{T}(x)+\bo{\frac{\sqrt{x}}{\log x}} \,.
\label{sumpg}
\end{equation}
Given $T$ such that $1+a_it_i$ and $d_i$ are not coprime for some $i$,   the set $\mc V_T$ contains at most $[K:\Q]$ primes, as in this case a prime $\p\in\mc V_T$ must lie above a fixed prime divisor of $d_i$. 
There are finitely many primes $\p$ lying in some  $\mc V_T$ with  $1+a_it_i$ and $d_i$ not coprime for some $i$ (at most $[K:\Q]$ primes $\p$ for each rational prime $p$ dividing one of the integers $d_i$), and since the sets $\mc V_T$ are disjoint, they are counted only once.
Therefore, we may restrict the multiple series in \eqref{sumpg} to the multi-indices $T$ with $(1+a_it_i,d_i)=1$ for every $i$.

Recall the notation $w=w(T):=[d_1t_1,\ldots,d_rt_r]$.
\begin{lem}\label{lem-v}
Fix $T$ such that $(1+a_it_i,d_i)=1$ for all $i$. Then the set $\mc V_T$ can be written as
\[ \mc V_{T}=\set{\p: \ind_\p(\alpha_i)=t_i\,\forall  i,\, \begin{pmatrix}\p \\ F(\zeta_{w})/K  \end{pmatrix}\subseteq C_{w}}\,, \]
where 
\[ C_{w}:=\left\{ \sigma\in\Gal(F(\zeta_w)/K) : \forall i \ \sigma(\zeta_{d_it_i})=\zeta_{d_it_i}^{1+a_it_i},\, \sigma|_F\in C\right\}\,. \]
Moreover, assuming (GRH) and letting $x\geq t_i^{3}$ for all $i$, the function $\mc V_T(x)$ satisfies
\begin{equation}\label{v-fct}
\mc V_T(x)=\frac{x}{\log x}\sum_{N}\frac{(\prod_{i}\mu(n_i))c(N,T)}{[F_{w,N,T}:K]} +\bo{\frac{x}{\log^{2}x}+\sum_{i=1}^r\frac{x\log\log x}{\varphi(t_i)\log^2x}},
\end{equation}
where $F_{w,N,T}$ and $c(N,T)$ are as in Theorem \ref{main}. Moreover, $c(N,T)>0$ holds only if we have $(d_i,n_i)\mid a_i$ for all $i$.
The constant implied by the $O$-term depends only on $K$, $F$, the $\alpha_i$'s and the $d_i$'s.
\end{lem}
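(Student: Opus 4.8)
The plan is to realise $\mc V_T$ as an instance of the set $\mc R$ of Theorem \ref{thm-index1}, but with the Galois extension $F/K$ enlarged to $F(\zeta_w)/K$ and $C$ replaced by $C_w$, and then to transport the asymptotic \eqref{r-asymptotic}, the one delicate point being to keep the implied constant independent of $T$. First I would prove the set identity. For a prime $\p$ of $K$ of degree $1$ above the rational prime $p$ and unramified in $F(\zeta_w)$, the Frobenius of $\p$ in $\Gal(K(\zeta_{d_it_i})/K)$ sends $\zeta_{d_it_i}\mapsto\zeta_{d_it_i}^{\N\p}=\zeta_{d_it_i}^{p}$, so $p\equiv 1+a_it_i\bmod d_it_i$ is equivalent to $\Frob_\p(\zeta_{d_it_i})=\zeta_{d_it_i}^{1+a_it_i}$. (The condition $\ind_\p(\alpha_i)=t_i$ forces $p\nmid t_i$, so the only primes one must discard are those dividing some $d_i$.) Imposing these equalities for all $i$ together with $(\p,F/K)\subseteq C$ is, since $F(\zeta_w)$ is the compositum of $F$ and the fields $K(\zeta_{d_it_i})$ and $\zeta_w=\zeta_{[d_1t_1,\ldots,d_rt_r]}$, precisely the condition $(\p,F(\zeta_w)/K)\subseteq C_w$; and $C_w$ is a union of conjugacy classes because $K(\zeta_w)/K$ is abelian. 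This yields the description of $\mc V_T$ in the statement (on the degree-$1$ primes, which is all that is needed, the remaining ones contributing $\bo{\sqrt x/\log x}$).

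Next I would apply Theorem \ref{thm-index1} to $F(\zeta_w)/K$, the union of conjugacy classes $C_w$, and the tuple $T$ (the hypothesis $x\geq t_i^3$ is unchanged). One has $(F(\zeta_w))_{N,T}=F\big(\zeta_{[w,n_1t_1,\ldots,n_rt_r]},\alpha_1^{1/n_1t_1},\ldots,\alpha_r^{1/n_rt_r}\big)=F_{w,N,T}$, since $[w,n_1t_1,\ldots,n_rt_r]=[d_1t_1,\ldots,d_rt_r,n_1t_1,\ldots,n_rt_r]$, and the resulting coefficient $\cset{\sigma\in\Gal(F_{w,N,T}/K):\sigma|_{K_{N,T}}=\id,\,\sigma|_{F(\zeta_w)}\in C_w}$ equals $c(N,T)$, because $\sigma|_{F(\zeta_w)}\in C_w$ unravels to $\sigma|_F\in C$ and $\sigma(\zeta_{d_it_i})=\zeta_{d_it_i}^{1+a_it_i}$ for every $i$. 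Hence the main term is exactly the one in \eqref{v-fct} and the error term has the shape stated there.

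The main obstacle is that a black-box application of Theorem \ref{thm-index1} gives an implied constant depending on $F(\zeta_w)$, hence on $w=w(T)$, whereas the lemma requires dependence only on $K$, $F$, the $\alpha_i$'s and the $d_i$'s. To fix this I would re-run the proof of Theorem \ref{thm-index1}, i.e.\ Lemmas \ref{lem-index-main} and \ref{lem-index-error}, with $F(\zeta_w)$ in place of $F$, tracking the $w$-dependence. The coefficients $c(N,T)$ remain bounded by $[F:K]$ (each is at most $\av{C_w}\leq\av{C}$, since the prescribed action on the $\zeta_{d_it_i}$ determines $\sigma$ on $\zeta_w$). In the Chebotarev error the contribution $\log(x^{[F_{w,N,T}:\Q]})/[F_{w,N,T}:K]=[K:\Q]\log x$ is harmless, and for the discriminant contribution I would apply Proposition \ref{prop-discr} and Corollary \ref{cor-bound} with base field $F$ (legitimate since the $\alpha_i$ lie in $K^\times\subseteq F^\times$), obtaining
\[
\frac{\log\av{d_{F_{w,N,T}}}}{[F_{w,N,T}:K]}=\bo{\log\big([w,n_1t_1,\ldots,n_rt_r]\,n_1t_1\cdots n_rt_r\big)}+\bo{1}
\]
with constants depending only on $K$, $F$ and the $\alpha_i$'s. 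The elementary fact that makes everything uniform is that $[w,n_1t_1,\ldots,n_rt_r]$ divides $[d_1,\ldots,d_r]\cdot[n_1t_1,\ldots,n_rt_r]$, so $\log[w,n_1t_1,\ldots,n_rt_r]\leq\log[d_1,\ldots,d_r]+\sum_i\log(n_it_i)$; with this replacement the summation over the set $E$ of Lemma \ref{lem-index-main} and the tail estimate (via Corollary \ref{cor-bound} over $F$ and Theorem \ref{pollack}) go through word for word, now with a constant depending only on $K,F,\alpha_i,d_i$, and in particular no hypothesis like $x\geq w^3$ is needed. For Lemma \ref{lem-index-error} there is nothing to change: the Frobenius condition only shrinks the sets involved, so one drops it and invokes \cite[Lemmas 9, 10, 11]{zieg}, which involve neither $F$ nor $w$.

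Finally, for the divisibility criterion: if $c(N,T)>0$, choose $\sigma\in\Gal(F_{w,N,T}/K)$ with $\sigma|_{K_{N,T}}=\id$ and $\sigma(\zeta_{d_it_i})=\zeta_{d_it_i}^{1+a_it_i}$, and restrict $\sigma$ to $\zeta_g$ with $g:=(d_it_i,n_it_i)=t_i(d_i,n_i)$: on the one hand $\zeta_g\in K(\zeta_{n_it_i})\subseteq K_{N,T}$ forces $\sigma(\zeta_g)=\zeta_g$, on the other $\zeta_g\in K(\zeta_{d_it_i})$ gives $\sigma(\zeta_g)=\zeta_g^{1+a_it_i}$; hence $g\mid a_it_i$, i.e.\ $(d_i,n_i)\mid a_i$, for every $i$. (The companion condition $(1+a_it_i,d_i)=1$ is the standing hypothesis on $T$.)
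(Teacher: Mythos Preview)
Your proposal is correct and follows essentially the same approach as the paper: reinterpret $\mc V_T$ as an instance of the set $\mc R$ for the enlarged extension $F(\zeta_w)/K$ with conjugacy-class union $C_w$, apply Theorem~\ref{thm-index1}, and then argue that the implied constant does not pick up any $T$-dependence. Your treatment is in fact more explicit than the paper's, which merely asserts that ``one can check easily'' the uniformity and points to the bound $c(N,T)\leq [F:K]$ and to viewing $F_{w,N,T}$ as a cyclotomic-Kummer extension of $F$ in Proposition~\ref{prop-discr}; your injectivity argument for $|C_w|\leq|C|$ and your divisibility $[w,n_1t_1,\ldots,n_rt_r]\mid[d_1,\ldots,d_r]\cdot[n_1t_1,\ldots,n_rt_r]$ make precise exactly those two hints.
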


\begin{proof}
We keep the notation and the assumptions described above.
Since $1+a_it_i$ and $d_i$ are coprime for all $i$, if $\p\in\mc V_T$ and $\N\p=p$, then $p\nmid d_it_i$ for all $i$ and we have the equivalence
\begin{equation}\label{equivalence}
p\equiv 1+a_it_i\bmod d_it_i \quad \Longleftrightarrow \quad 
\begin{pmatrix}p \\ \Q(\zeta_{d_it_i})/\Q  \end{pmatrix} \text{ satisfies } \zeta_{d_it_i}\mapsto\zeta_{d_it_i}^{1+a_it_i}\,.
\end{equation}
The first part of the statement is now clear, because the condition on the right of \eqref{equivalence} holds for all $i$ if and only if $(\p,K(\zeta_w)/K)$ acts as the exponentiation by $1+a_it_i$ on $\zeta_{d_it_i}$ for all $i$. 

In order to get an asymptotic formula for $\mc V_T(x)$, it is sufficient to apply Theorem \ref{thm-index1}  to the field extension $F(\zeta_w)/K$ and  $C_w$. 
Notice that in this application of Theorem \ref{thm-index1} the number $c'(N,T)$ coincides with the number $c(N,T)$ of Theorem \ref{main}. Moreover, the coefficient $c(N,T)$ is zero if $(d_i,n_i)\nmid a_i$ for some $i$ because if an automorphism $\sigma$ is counted, then it must act on $\zeta_{(d_i,n_i)t_i}$ as the identity and as the exponentiation by $1+a_it_i$.

As for the constant implied by the $O$-term, one can check easily that applying Theorem \ref{thm-index1} to $F(\zeta_w)/K$ and  $C_w$ preserves  its independence from the parameters $t_i$ (except for the factors $\varphi(t_i)$, which are already explicit). Indeed, in the proof of Lemma \ref{lem-index-main} it is sufficient to take into account the bound  $c(N,T)\leq[F:K]$, and to see $F_{w,N,T}$ as a cyclotomic-Kummer extension of $F$ when applying Proposition \ref{prop-discr}.
\end{proof}

We are now ready to prove Theorem \ref{main}.
\begin{proof}[Proof of Theorem \ref{main}]
Let $\mc V_T$ be as above.
We follow the proof of Theorem \ref{thm-index} closely. Take $0<\rho\leq1/2$ and set $y:=(\log x)^{\rho}$. Consider formula \eqref{sumpg}.
We estimate the tail of the function $\mc P(x)$ in the same way as we did for the function $\mc S(x)$, i.e.\ similarly as in \eqref{eq-tail} and then applying Lemma \ref{lem-index}  (we may take $S_i$ to be the set of all positive integers for all $i$). We obtain 
\begin{equation}
\mc P(x)=\sum_{\substack{T \\ t_i\leq y}}\mc V_{T}(x)+
		\bo{\frac{x}{(\log x)^{1+\rho}}}\,,
\label{eqpro3}
\end{equation}
where we may restrict the indices $t_i$ to those satisfying $(1+a_it_i,d_i)=1$ for all $i$. Notice that $\bo{\sqrt{x}/\log x}$ is also included in the error term.

We choose $\rho=1/(r+1)$.
As $x>y^3$, by Lemma \ref{lem-v} we may replace in \eqref{eqpro3} $\mc V_T(x)$ by the asymptotic \eqref{v-fct}. Let us first focus on the main term, namely
\[ \frac{x}{\log x}
	\sum_{\substack{T \\ t_i\leq y} }\
	\sum_{N}
	\frac{(\prod_{i}\mu(n_i))c(N,T)}{[F_{w,N,T}:K]}\,, \]
	where we may restrict the indices $n_i$ to those with $(d_i,n_i)\mid a_i$. We deal with the multiple  sum on  the multi-indices $T$ as we did in \eqref{multisum1} (where the condition $t_i\in S_i$ trivially holds). Since the degree $[F_{w,N,T}:K]$ is a multiple of $[F_{N,T}:K]$, we can then proceed as in the proof of Theorem \ref{thm-index}, i.e.\ by applying Theorem \ref{pollack} (recalling that $c(N,T)\leq[F:K]$).  Finally, we control the error terms directly as we did for $\mc S(x)$.
\end{proof}

Notice that in the case $r=1$, our choice for $\rho$ yields the same error obtained by Ziegler in \cite[Theorem 1]{zieg}.

\begin{rem}
For  $N,T$ fixed, we could say more about the necessary conditions for the coefficient $c(N,T)$ to be  nonzero. Suppose it counts at least one element $\sigma\in\Gal(F_{w,N,T}/K)$. Then for each $i$, $\sigma$ must act as the exponentiation by $1+a_it_i$ on the root of unity $\zeta_{d_it_i}$, so that the system of congruences $y\equiv a_it_i\bmod d_it_i$ must be solvable. This is the case if and only if we have $a_it_i\equiv a_jt_j\bmod(d_it_i,d_jt_j)$ for every $i,j$, and the solution $y$ will be unique modulo $w$. This integer $y=y(T)$  would be such that  $[t_1,\ldots,t_r]\mid y$ and $\sigma(\zeta_w)=\zeta_w^{1+y}$.

Suppose that the above-mentioned system has a solution $y$. Then the element $\tau\in\Gal(\Q(\zeta_w)/\Q)$ such that $\tau(\zeta_w)=\zeta_w^{1+y}$ must be the identity on  $\Q(\zeta_w)\cap K_{N,T}$. This implies that $\tau$ fixes $\zeta_{(w,v)}$ where $v=v(N,T):=[n_1t_1,\ldots,n_rt_r]$, so that we must have $(w,v)\mid y$. This condition implies for instance that $(d_it_i,n_jt_j)\mid a_it_i$ for every $i,j\in\{1,\ldots,r\}$ (because $y\equiv a_it_i\bmod d_it_i$).
\end{rem}

\begin{rem}
Let $K$ be a number field, and let $G_1,\ldots,G_r$ be finitely generated subgroups of $K^\times$ of finite positive rank $s_1,\ldots,s_r$, respectively, which generate a torsion-free subgroup of $K^\times$ of rank $\sum_i s_i$. For a prime $\p$ of $K$, let $\ord_\p(G_i)$ be the order of the reduction of  $G_i$ modulo $\p$, when this is well-defined.
 In Theorem \ref{main}, one could instead study the set of primes $\p$ of $K$ satisfying $\ord_\p(G_i)\equiv a_i\bmod d_i$ for all i (and possibly an additional condition on the Frobenius). The result would be analogous, simply replacing $\alpha_i$ with $G_i$ in the definitions of $F_{w,N,T}$ and $c(N,T)$. The error term would be the same, i.e.\ with the exponent $(1+1/(r+1))$ in  the denominator.

Indeed, the author and Perucca generalized Ziegler's results \cite{zieg} to finite rank in \cite{PS1}, so that one could use the latter work to achieve directly all the steps of the present paper for the problem introduced in this remark.  

Write $\ind_\p(G_i)$ for the index of the reduction of $G_i$ modulo $\p$. One could also study the sets analogous to those of Theorems \ref{thm-index1} and \ref{thm-index} with the conditions $\ind_\p(G_i)=t_i$ and $\ind_\p(G_i)\in S_i$, respectively. The analogous results can be obtained also in this case.
\end{rem}

\section*{Acknowledgments}
The author would like to thank Antonella Perucca for suggesting the problem and for her valuable feedback. A special thank to Paul Pollack for his considerable help with the results of Section \ref{sec-euler}.

\end{document}